\renewcommand*{\bar}{\overline}
\theoremstyle{plain}
\newtheorem{theorem}{Theorem}[section]
\newtheorem{lemma}[theorem]{Lemma}
\newtheorem{prop}[theorem]{Proposition}
\newtheorem{cor}[theorem]{Corollary}
\theoremstyle{definition}
\numberwithin{equation}{section}
\newcommand{\outdeg}{\textup{outdeg}}
\newcommand{\indeg}{\textup{indeg}}
\def\imod#1{\allowbreak\mkern5mu({\operator@font mod}\,\,#1)}
\begin{document}

\title[Paley Digraphs]{Transitive subtournaments of $k$-th Power\\Paley Digraphs and improved lower\\ bounds for Ramsey numbers}

\author{Dermot M\lowercase{c}Carthy, Mason Springfield}

\address{Dermot M\lowercase{c}Carthy, Department of Mathematics \& Statistics, Texas Tech University, Lubbock, TX 79410-1042, USA}
\email{dermot.mccarthy@ttu.edu}
\urladdr{https://www.math.ttu.edu/~mccarthy/}

\address{Mason Springfield, Department of Mathematics \& Statistics, Texas Tech University, Lubbock, TX 79410-1042, USA}
\email{mason.springfield@ttu.edu}

\subjclass[2020]{Primary: 05C30, 11T24; Secondary: 05C55}

\begin{abstract}
Let $k \geq 2$ be an even integer. Let $q$ be a prime power such that $q \equiv k+1 \imod {2k}$. We define the \emph{k-th power Paley digraph} of order $q$, $G_k(q)$, as the graph with vertex set $\mathbb{F}_q$ where $a \to b$ is an edge if and only if $b-a$ is a $k$-th power residue. This generalizes the (k=2) Paley Tournament. We provide a formula, in terms of finite field hypergeometric functions, for the number of transitive subtournaments of order four contained in $G_k(q)$, $\mathcal{K}_4(G_k(q))$, which holds for all $k$. We also provide a formula, in terms of Jacobi sums, for the number of transitive subtournaments of order three contained in $G_k(q)$, $\mathcal{K}_3(G_k(q))$. 
In both cases, we give explicit determinations of these formulae for small $k$. 
We show that zero values of $\mathcal{K}_4(G_k(q))$ (resp.~$\mathcal{K}_3(G_k(q))$) yield lower bounds for the multicolor directed Ramsey numbers $R_{\frac{k}{2}}(4)=R(4,4,\cdots,4)$ (resp.~$R_{\frac{k}{2}}(3)$). We state explicitly these lower bounds for $k\leq 10$ and compare to known bounds, showing improvement for $R_2(4)$ and $R_3(3)$. Combining with known multiplicative relations we give improved lower bounds for $R_{t}(4)$, for all $t\geq 2$, and for $R_{t}(3)$, for all $t \geq 3$.
\end{abstract}

\maketitle


\section{Introduction}\label{sec_Intro}
The Paley graphs are a well-known family of self-complementary strongly-regular undirected graphs. Let $\mathbb{F}_q$ denote the finite field with $q$ elements. For $q\equiv 1 \pmod 4$, the Paley graph of order $q$ is the graph with vertex set $\mathbb{F}_q$ where $ab$ is an edge if and only if $a-b$ is a square. 
One of the earliest appearances of Paley graphs (although not called that at the time) in the literature was in Greenwood and Gleason's proof that the two-color diagonal Ramsey number $R(4,4)=18$, in 1955 \cite{GG}. They showed that the Paley graph of order 17 does not contain a clique of order four, thus showing $17<R(4,4)$, and then combined this with elementary upper bounds. Paley graphs can be generalized from connections based on squares to $k$-th powers, for any integer $k \geq 2$, provided  $q \equiv 1 \imod {k}$ if $q$ is even, or, $q \equiv 1 \imod {2k}$ if $q$ is odd \cite{C, LP}. Finding the number of cliques of a given order and improving bounds for the order of the maximum clique (i.e., the clique number) in generalized Paley graphs is an open problem and an active area of inquiry \cite{CL, DM, GSY, HP, Y, Y2}. Using various types of character sums to count cliques in generalized Paley graphs and other similar types of graphs is a common theme \cite{BB, BB2, BB3, DM, Y}.

When $q\equiv 3 \pmod 4$, we can use a similar construction to Paley graphs to define the Paley tournament. Specifically, the Paley tournament of order $q$ is the digraph with vertex set $\mathbb{F}_q$ where $a \to b$ is an edge if and only if $b-a$ is a square. Erd\"{o}s and Moser \cite{EM} used the Paley tournament of order seven to prove that the directed Ramsey number $R(4)=8$. They showed that the Paley tournament of order seven does not contain a transitive subtournament of order four, thus showing $7<R(4)$, and then combined this with elementary upper bounds. 

In a similar manner to how Paley graphs can be generalized, we can generalize the Paley tournament construction to higher powers.
Let $k \geq 2$ be an even integer. Let $q$ be a prime power such that $q \equiv k+1 \imod {2k}$. Let $S_k$ be the subgroup of the multiplicative group $\mathbb{F}_q^{\ast}$ of order $\frac{q-1}{k}$ containing the $k$-th power residues, i.e., if $\omega$ is a primitive element of $\mathbb{F}_q$, then $S_k = \langle \omega^k \rangle$. Then we define the \emph{k-th power Paley digraph} of order $q$, $G_k(q)$, as the graph with vertex set $\mathbb{F}_q$ where $a \to b$ is an edge if and only if $b-a \in S_k$. We note, due to the conditions imposed on $q$, that $-1 \notin S_k$ so $G_k(q)$ is a well-defined directed graph. When $k=2$ we recover the Paley tournament.
There is little reference to these graphs in the literature for $k>2$. Ananchuen \cite{An} proves that, for positive integers $n$ and $t$, and for $q>f(n,t)$ sufficiently large, $G_4(q)$ has the property that every subset of $n$ vertices is dominated by at least $t$ other vertices. Podest\'{a} and Videla \cite{PV} study the spectral properties of $G_k(q)$ and give an explicit evaluation of the spectrum when $k=4$. 

Let $\mathcal{K}_m(G)$ denote the number of transitive subtournaments of order $m$ contained in a digraph $G$. The main purpose of this paper is to provide a general formula for both $\mathcal{K}_3(G_k(q))$ and $\mathcal{K}_4(G_k(q))$, which hold for all $k$. In both cases, we give explicit determinations of these formulae for small $k$. We also examine the consequences for lower bounds for multicolor directed Ramsey numbers. Specifically, we will show that if $\mathcal{K}_m(G_k(q))=0$ for some $q$, then $q<R_{\frac{k}{2}}(m)$ and use this to provide lower bounds for the multicolor directed Ramsey numbers $R_{\frac{k}{2}}(3)$ and $R_{\frac{k}{2}}(4)$. We compare to known bounds, showing improvement  in the case of $R_2(4)$ and $R_3(3)$. Combining these with known multiplicative relations, we give improved lower bounds for $R_{t}(4)$, for all $t\geq 2$, and for $R_{t}(3)$, for all $t \geq 3$.

We use similar techniques to \cite{DM}, which contains analogous results for generalized undirected Paley graphs. Besides the obvious differences of dealing with digraphs and transitive subtournaments in this paper, as opposed to undirected graphs and cliques in \cite{DM}, the character sum determinations are more complicated in this paper due to the fact that -1 is not a $k$-th power. 
This is especially relevant for the evaluations in Section \ref{sec_ProofThm2}.

\section{Statement of Main Results}\label{sec_Results}
\subsection{Subtournaments of Order Four}\label{sec_Results_Four}
Our most general results will be stated in terms of Greene's finite field hypergeometric function \cite{G, G2}. Let $\widehat{\mathbb{F}^{*}_{q}}$ denote the group of multiplicative characters of $\mathbb{F}^{*}_{q}$. We extend the domain of $\chi \in \widehat{\mathbb{F}^{*}_{q}}$ to $\mathbb{F}_{q}$, by defining $\chi(0):=0$ (including the trivial character $\varepsilon$) and denote $\bar{\chi}$ as the inverse of $\chi$. 
For $A, B \in \widehat{\mathbb{F}^{*}_{q}}$, we define the Jacobi sum $J(A, B):=\sum_{a \in \mathbb{F}_{q}} A(a) B(1-a)$ and define the symbol
$\binom{A}{B} := \frac{B(-1)}{q} J(A, \bar{B})$.
For characters $A_0,A_1,\dotsc, A_n$ and $B_1, \dotsc, B_n$ of $\mathbb{F}_{q}^*$ and 
$\lambda \in \mathbb{F}_{q}$, define the finite field hypergeometric function
\begin{equation*}
{_{n+1}F_n} {\left( \begin{array}{cccc} A_0, & A_1, & \dotsc, & A_n \\
\phantom{A_0} & B_1, & \dotsc, & B_n \end{array}
\Big| \; \lambda \right)}_{q}
:= \frac{q}{q-1} \sum_{\chi} \binom{A_0 \chi}{\chi} \binom{A_1 \chi}{B_1 \chi}
\dotsm \binom{A_n \chi}{B_n \chi} \chi(\lambda),
\end{equation*}
where the sum is over all multiplicative characters $\chi$ of $\mathbb{F}_{q}^*$.
For $k \geq 2$ an integer, let $\chi_k \in \widehat{\mathbb{F}^{*}_{q}}$ be a character of order $k$, when $q\equiv 1 \imod {k}$.
For $\vec{t}=(t_1,t_2,t_3,t_4,t_5) \in \left({\mathbb{Z}_{k}}\right)^{5}$, we define
\begin{equation*}
{_{3}F_2} \left( \vec{t} \; \big| \;  \lambda \right)_{q,k}
:=
(-1)^{t_3+t_5} {_{3}F_2}\biggl( \begin{array}{ccc} \chi_k^{t_1}, & \chi_k^{t_2}, & \chi_k^{t_3} \vspace{.05in}\\
\phantom{\chi_k^{t_1}} & \chi_k^{t_4}, & \chi_k^{t_5} \end{array}
\Big| \; \lambda \biggr)_{q}.
\end{equation*}

\begin{theorem}\label{thm_Main1}
Let $k \geq 2$ be an even integer. Let $q$ be a prime power such that $q \equiv k+1 \imod {2k}$. Then
\begin{equation*}
\mathcal{K}_4(G_k(q)) = 
\frac{q(q-1)}{k^6} 
\sum_{\vec{t} \in \left({\mathbb{Z}_{k}}\right)^{5}} {_{3}F_2} \left( \vec{t} \; \big| \;  1 \right)_{q,k}.
\end{equation*}
\end{theorem}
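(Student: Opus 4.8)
The plan is to count transitive subtournaments directly, convert the membership conditions $b-a\in S_k$ into multiplicative character sums, and then collapse the resulting expression onto Greene's ${}_3F_2$ by exploiting the affine symmetries of $G_k(q)$.

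First I would record that every transitive tournament on four vertices has a \emph{unique} topological ordering $v_1\to v_2\to v_3\to v_4$; hence $\mathcal{K}_4(G_k(q))$ equals the number of ordered quadruples $(a_1,a_2,a_3,a_4)\in\mathbb{F}_q^4$ with $a_j-a_i\in S_k$ for every $1\le i<j\le 4$. Since $x\in S_k$ exactly when $\chi_k(x)=1$, orthogonality on $\mathbb{F}_q^{\ast}/S_k$ furnishes the indicator $\frac{1}{k}\sum_{t=0}^{k-1}\chi_k^{t}(x)$, which moreover vanishes at $x=0$ because $\chi_k^{t}(0)=0$; this automatically discards the degenerate quadruples with two equal coordinates. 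Substituting this indicator into each of the six edge conditions expands $\mathcal{K}_4(G_k(q))$ as $\frac{1}{k^6}\sum_{\vec{t}\in(\mathbb{Z}_k)^6}$ of a product of six factors $\chi_k^{t_\bullet}(a_j-a_i)$, indexing the pairs so that $t_1,t_2,t_3$ go with $(1,2),(1,3),(1,4)$ and $t_4,t_5,t_6$ with $(2,3),(2,4),(3,4)$.

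Next I would use the affine symmetries of the digraph. Translation invariance ($a_i\mapsto a_i+c$) makes each inner sum depend only on the differences, so fixing $a_1=0$ contributes a factor $q$. The dilation $(a_2,a_3,a_4)=(ur,us,u)$ with $u\in\mathbb{F}_q^{\ast}$ then factors out $\chi_k^{t_1+\cdots+t_6}(u)$; summing over $u$ produces the factor $q-1$ together with the constraint $t_1+\cdots+t_6\equiv 0\imod{k}$, which collapses the index set to an effective copy of $(\mathbb{Z}_k)^5$. What survives is the two–variable character sum $\sum_{r,s}\chi_k^{t_1}(r)\chi_k^{t_2}(s)\chi_k^{t_4}(s-r)\chi_k^{t_5}(1-r)\chi_k^{t_6}(1-s)$, with the pair $(1,4)$ contributing only through the $u$–power. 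This already exhibits the prefactor $\frac{q(q-1)}{k^6}$ and the index set $(\mathbb{Z}_k)^5$ appearing in the statement, so it remains to identify each two–variable sum with ${}_3F_2(\vec{t}\,|\,1)_{q,k}$.

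This identification is the hard step, and the obstacle is structural: the cross term $\chi_k^{t_4}(s-r)$ couples $r$ and $s$ \emph{additively}, whereas Greene's double–sum (integral) representation of ${}_3F_2$ at $\lambda=1$ couples its two variables \emph{multiplicatively}, through a factor of shape $\overline{A_0}(1-xy)$. I would bridge this by the change of variables $s\mapsto rs$, valid on $r\neq 0$ (the $r=0$ term vanishing), which converts $\chi_k^{t_4}(s-r)$ into a sign times $\chi_k^{t_4}(1-s)$ and converts $\chi_k^{t_6}(1-s)$ into the multiplicative coupling $\chi_k^{t_6}(1-rs)$; after this the sum matches Greene's representation term by term, and reading off $A_0,A_1,A_2,B_1,B_2$ as explicit monomials in $\chi_k$ exhibits it as a ${}_3F_2$. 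The genuinely delicate bookkeeping is then twofold: collecting all powers of $q$ emitted by the two representations so they agree with the stated normalization, and tracking every factor $\chi_k^{t}(-1)$. Here the hypothesis $q\equiv k+1\imod{2k}$ is essential — it forces $\chi_k(-1)=-1$, so each reflection $r\mapsto 1-r$, $s\mapsto 1-s$, or sign flip in the change of variables and in Greene's representation emits a factor $(-1)^{t}$, and verifying that these signs accumulate to exactly the normalizing factor $(-1)^{t_3+t_5}$ built into ${}_3F_2(\vec{t}\,|\,1)_{q,k}$, while the powers of $q$ match the prefactor, is where essentially all of the work lies. This is precisely the point the introduction flags as more intricate than the undirected case, because $-1\notin S_k$.
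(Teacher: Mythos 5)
Your proposal is correct and takes essentially the same route as the paper: what you phrase as fixing $a_1=0$ (translation) and scaling out $a_4=u$ (dilation) is exactly the content of the paper's Lemma \ref{lem_Subgraph}, which reduces $\mathcal{K}_4(G_k(q))$ to $\tfrac{q(q-1)}{k}\,\#E(H^{1}_k(q))$, and your orthogonality expansion plus identification of the surviving two-variable character sum with a ${_{3}F_2}$ at $1$ is precisely Proposition \ref{prop_Subgraph}(f). Two remarks on your ``hard step'': the structural obstacle you anticipate does not arise, because the representation (\ref{for_CharSum3F2}) used in the paper already couples the two variables additively through the kernel $\bar{A}(a-\lambda b)$, so your sum matches it directly (after a linear relabeling of the exponents) without the substitution $s\mapsto rs$; and when you collect powers of $q$ you will find that each two-variable sum equals $q^{2}\,{_{3}F_2}\left( \vec{t} \; \big| \; 1 \right)_{q,k}$ --- this factor $q^{2}$ appears in Proposition \ref{prop_Subgraph}(f) and in (\ref{for_3F2_Total}), and is evidently intended in (though absent from) the printed statement of Theorem \ref{thm_Main1}, so you should match your normalization to the former rather than to the statement as written.
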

\noindent 
We can use known reduction formulae for finite field hypergeometric functions to simplify many of the summands in Theorem \ref{thm_Main1}.
Evaluating these terms yields our second result. 

\begin{theorem}\label{thm_Main2}
Let $k \geq 2$ be an even integer. Let $q$ be a prime power such that $q \equiv k+1 \imod {2k}$.  Then
\begin{multline*}
\mathcal{K}_4(G_k(q)) =
\frac{q(q-1)}{k^6} 
\Biggl[ 
10 \, \mathbb{R}_k(q)^2 + 5 \left( q-k^2+1\right) \mathbb{R}_k(q)  -10 \, \mathbb{S}_k(q) -5 \, \mathbb{S}^{-}_k(q) 
\\
+q^2-10 \left(k-1 \right)^2 q
+ 5k^2(k-1)+1
+q^2 \sum_{\vec{t} \in X_k} {_{3}F_2} \left( \vec{t} \; \big| \;  1 \right)_{q,k}
\Biggr],
\end{multline*}
where $X_k := \{  (t_1,t_2,t_3,t_4,t_5) \in \left({\mathbb{Z}_{k}}\right)^{5} \mid t_1,t_2,t_3 \neq 0, t_4,t_5 \, ; \, t_1+t_2+t_3 \neq t_4+t_5 \}$,
\begin{equation*}
\mathbb{R}_k(q) := \sum_{\substack{s,t=1 \\s+t \not\equiv 0 \, (k)}}^{k-1} 
J \left( \chi_k^s, \chi_k^t \right),
\qquad \qquad
\mathbb{S}_k(q) := 
\sum_{\substack{s,t,v=1 \\ s+t, v+t, v-s \not\equiv 0 \, (k) }}^{k-1} 
J \left( \chi_k^s, \chi_k^t \right)
J \left(\bar{\chi_k}^s , \chi_k^v \right).
\end{equation*}
and
\begin{equation*}
\mathbb{S}^{-}_k(q) := 
\sum_{\substack{s,t,v=1 \\ s+t, v+t, v-s \not\equiv 0 \, (k) }}^{k-1} 
(-1)^{s+t}
J \left( \chi_k^s, \chi_k^t \right)
J \left(\bar{\chi_k}^s , \chi_k^v \right).
\end{equation*}

\end{theorem}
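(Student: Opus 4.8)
The plan is to begin with the identity of Theorem~\ref{thm_Main1} and decompose the sum over $(\mathbb{Z}_k)^5$ into the \emph{generic} index set $X_k$, which is carried through untouched, and its complement, on which Greene's ${}_3F_2$ collapses to lower-order functions. A vector $\vec t \notin X_k$ violates at least one of the three conditions defining $X_k$: a top parameter is trivial ($t_i = 0$ for some $i \in \{1,2,3\}$); a top parameter matches a bottom parameter ($t_i \in \{t_4,t_5\}$ for some $i\in\{1,2,3\}$); or the \emph{Saalsch\"{u}tzian} balance $t_1+t_2+t_3 \equiv t_4+t_5 \imod k$ holds. First I would record, for each situation, the relevant reduction identity for the finite field hypergeometric function from \cite{G, G2}: cancellation of a matching top/bottom pair lowers ${}_3F_2$ to ${}_2F_1$; a trivial top parameter collapses a binomial symbol through the explicit values of $\binom{\varepsilon}{\varepsilon}$ and $\binom{\psi}{\psi}$; and a ${}_2F_1$ at argument $1$ is summed in closed form by the finite field analogue of Gauss's theorem, yielding a product of the symbols $\binom{A}{B} = \frac{B(-1)}{q} J(A,\overline{B})$. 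Iterating these, each reducible ${}_3F_2(\vec t \mid 1)_{q,k}$ is written as a product of at most two Jacobi sums times an explicit power of $q$.

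Next I would sort the reduced summands by the pattern of which parameters collapse and whether the two surviving Jacobi sums share a summation index. Summands whose Jacobi sums run over independent indices assemble, after summation over the free parameters, into $\mathbb{R}_k(q)^2$; those sharing the single index appearing in the conjugate pair $\chi_k^s$, $\overline{\chi_k}^{\,s}$ assemble into $\mathbb{S}_k(q)$, or, when the surviving $(-1)^{t_3+t_5}$ weight is nontrivial on the collapsed coordinates, into $\mathbb{S}^{-}_k(q)$; summands reducing to a single Jacobi sum assemble into $\mathbb{R}_k(q)$, the accompanying factor $q-k^2+1$ arising from the residual free character sum; and the fully degenerate summands, on which the hypergeometric function is constant, assemble into the polynomial $q^2 - 10(k-1)^2 q + 5k^2(k-1) + 1$. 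The multiplicities $10 = \binom{5}{2}$ and $5 = \binom{5}{1}$ come from the $S_3\times S_2$ symmetry permuting the three top and two bottom parameters of ${}_3F_2$, so that the number of positions realizing a given collapse pattern is a binomial coefficient. The irreducible summands are retained to form the sum over $X_k$; here I would take care to match the normalization used in the reductions so that these summands carry the factor $q^2$ appearing in the statement (the bare triple Jacobi sum being $q^2$ times the normalized ${}_3F_2$, up to the character average).

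The main obstacle is the bookkeeping. The three degeneracy conditions overlap substantially (for instance $t_1=0$ and $t_2=t_4$ can hold together, and such a vector may additionally be Saalsch\"{u}tzian), so I would run the computation as an inclusion--exclusion over the complement of $X_k$, reducing each multiply-degenerate configuration by the correct composite identity and subtracting it with the right multiplicity. This is compounded by signs: since $q\equiv k+1 \imod{2k}$ forces $-1\notin S_k$, we have $\chi_k(-1)=-1$, so the explicit $(-1)^{t_3+t_5}$ in ${}_3F_2(\vec t \mid 1)_{q,k}$ together with the factors $\chi(-1)$ emitted by the symbol $\binom{A}{B}$ must be propagated through every reduction. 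It is precisely these signs that distinguish $\mathbb{S}_k(q)$ from $\mathbb{S}^{-}_k(q)$ and that account for the extra delicacy, flagged by the authors, relative to the undirected treatment in \cite{DM}.

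Finally I would collect all contributions, verify that the complement of $X_k$ contributes exactly $10\,\mathbb{R}_k(q)^2 + 5(q-k^2+1)\mathbb{R}_k(q) - 10\,\mathbb{S}_k(q) - 5\,\mathbb{S}^{-}_k(q) + q^2 - 10(k-1)^2 q + 5k^2(k-1) + 1$, and, after adjoining the retained sum $q^2 \sum_{\vec t \in X_k} {}_3F_2(\vec t \mid 1)_{q,k}$ and restoring the prefactor $\frac{q(q-1)}{k^6}$, obtain the asserted formula. As an internal check I would specialize to $k=2$, where the complement collapses to a small explicit set and the expression must reproduce the known value of $\mathcal{K}_4(G_2(q))$ for the Paley tournament.
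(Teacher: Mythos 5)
Your proposal follows essentially the same route as the paper's proof: starting from Theorem~\ref{thm_Main1}, the paper splits off the complement of $X_k$, organizes it into ten reducible cases (three with a trivial upper parameter, six with a matching upper/lower pair, one with the balance condition $t_1+t_2+t_3=t_4+t_5$), reduces each via the formulae (\ref{for_3F2Red_1})--(\ref{for_3F2Red_6}) and (\ref{for_2F1Red}) while propagating the signs coming from $\chi_k(-1)=-1$, and then converts the resulting Jacobi-sum expressions into $\mathbb{R}_k(q)$, $\mathbb{S}_k(q)$ and $\mathbb{S}^{-}_k(q)$ via Proposition~\ref{prop_JtoRS}. The only differences are cosmetic bookkeeping: the paper makes the ten cases disjoint by successively excluding previously handled index tuples rather than running a signed inclusion--exclusion, and the coefficients $10$ and $5$ arise from totalling the ten cases' contributions (each case contributing one copy of $\mathbb{J}_0(q,k)^2$) rather than from the $S_3\times S_2$ symmetry count you suggest.
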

\noindent
Many of the summands that still remain in Theorem \ref{thm_Main2} are equal, up to sign.
A group action on $X_k$ is described in Section \ref{sec_Orbits}, which allows us to restrict the sum to orbit representatives.
For specific small $k$, Theorem \ref{thm_Main2} reduces to relatively few terms.

\begin{cor}[${k=2}$]\label{cor_k2}
Let $q \equiv 3 \pmod {4}$ be a prime power. Then
\begin{equation*}
\mathcal{K}_4(G_2(q)) =
\frac{q(q-1)(q-3)(q-7)}{2^6}. 
\end{equation*}
\end{cor}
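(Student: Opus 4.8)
The plan is to specialize Theorem \ref{thm_Main2} to $k=2$, where $\chi_2$ is the quadratic (Legendre) character and, because $q \equiv 3 \imod 4$, we have $\chi_2(-1) = -1$. First I would dispose of the three Jacobi-sum quantities $\mathbb{R}_2(q)$, $\mathbb{S}_2(q)$ and $\mathbb{S}^{-}_2(q)$. In each, the summation index runs only over $s,t,v = 1$ (since $k-1 = 1$), but then $s+t = 2 \equiv 0 \imod 2$ violates the constraint $s+t \not\equiv 0 \imod k$ attached to every summand; hence all three sums are empty and equal to $0$. Next I would determine the set $X_2$. In $\mathbb{Z}_2$ the requirement $t_1,t_2,t_3 \neq 0$ forces $t_1 = t_2 = t_3 = 1$, the requirement $t_1,t_2,t_3 \neq t_4,t_5$ then forces $t_4 = t_5 = 0$, and $t_1+t_2+t_3 = 1 \neq 0 = t_4+t_5$ holds automatically; so $X_2 = \{(1,1,1,0,0)\}$ is a singleton.

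The crux is therefore to evaluate the single surviving summand, ${_3F_2}\bigl((1,1,1,0,0) \,\big|\, 1\bigr)_{q,2} = -\,{_3F_2}\bigl(\begin{smallmatrix}\chi_2,\chi_2,\chi_2\\ \varepsilon,\varepsilon\end{smallmatrix}\big|\,1\bigr)_q$, the sign coming from the factor $(-1)^{t_3+t_5} = -1$. Unwinding the definition, this hypergeometric function equals $\frac{q}{q-1}\sum_{\chi}\binom{\chi_2\chi}{\chi}^3$. The main obstacle — and the one place the hypothesis $q\equiv 3\imod 4$ does real work — is showing this character sum vanishes. I would do this by reindexing under the bijection $\chi \mapsto \chi_2\bar\chi$ on characters. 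Using $\chi_2^2 = \varepsilon$ the summand becomes $\binom{\bar\chi}{\chi_2\bar\chi}^3$, and a short computation from the definition $\binom{A}{B} = \frac{B(-1)}{q}J(A,\bar B)$ together with the symmetry $J(A,B)=J(B,A)$ (and $\bar\chi_2 = \chi_2$) gives $\binom{\bar\chi}{\chi_2\bar\chi} = \chi_2(-1)\binom{\chi_2\chi}{\chi}$. Cubing and summing yields $\sum_\chi\binom{\chi_2\chi}{\chi}^3 = \chi_2(-1)^3\sum_\chi\binom{\chi_2\chi}{\chi}^3 = -\sum_\chi\binom{\chi_2\chi}{\chi}^3$, which forces the sum, and hence the whole ${_3F_2}$, to be $0$.

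Finally, I would substitute these evaluations into Theorem \ref{thm_Main2}. With the three Jacobi sums and the $X_2$-contribution all vanishing, the bracket collapses to $q^2 - 10(k-1)^2 q + 5k^2(k-1) + 1$ evaluated at $k=2$, namely $q^2 - 10q + 21 = (q-3)(q-7)$. Multiplying by the prefactor $\frac{q(q-1)}{2^6}$ then gives $\mathcal{K}_4(G_2(q)) = \frac{q(q-1)(q-3)(q-7)}{2^6}$, as claimed.
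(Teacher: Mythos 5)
Your proof is correct, and its skeleton is the same as the paper's: specialize Theorem \ref{thm_Main2} to $k=2$, note that $\mathbb{R}_2(q)=\mathbb{S}_2(q)=\mathbb{S}^{-}_2(q)=0$ because the index constraints empty each sum, identify $X_2=\{(1,1,1,0,0)\}$ with its sign $(-1)^{t_3+t_5}=-1$, and reduce everything to the vanishing of $q^2\,{_3F_2}(\varphi,\varphi,\varphi;\varepsilon,\varepsilon\,|\,1)_q$ for $q\equiv 3\pmod 4$, after which the bracket collapses to $q^2-10q+21=(q-3)(q-7)$.

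The one genuine difference is how that vanishing is established. The paper simply cites Greene's evaluation \cite[Thm 4.37]{G2}, whereas you prove it from scratch: the involution $\chi\mapsto\chi_2\bar{\chi}$ on $\widehat{\mathbb{F}^{*}_{q}}$, combined with the symmetry $J(A,B)=J(B,A)$ and $\bar{\chi}(-1)=\chi(-1)$, gives $\binom{\bar{\chi}}{\chi_2\bar{\chi}}=\chi_2(-1)\binom{\chi_2\chi}{\chi}$, hence $S:=\sum_\chi\binom{\chi_2\chi}{\chi}^3$ satisfies $S=\chi_2(-1)^3S=-S$, forcing $S=0$. I verified this identity; it is a complete argument, and the oddness of $\chi_2(-1)$ when $q\equiv 3\pmod 4$ is exactly where the hypothesis does its work, as you say. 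Your route buys self-containedness: the corollary then rests only on the parity of $\chi_2(-1)$ rather than on the full strength of Greene's theorem. The paper's citation buys uniformity: the same theorem of Greene (in its $q\equiv 1\pmod 4$ branch, via \cite[(6.4)]{DM}) is needed again for Corollary \ref{cor_k4}, where the analogous ${_3F_2}$ does not vanish and must be evaluated in terms of $q=x^2+y^2$, so no symmetry argument of your type could replace it there.
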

\noindent
So, $\mathcal{K}_4(G_2(7)) =0$ and we reconfirm the lower bound $8 \leq R(4)$ of Erd\"{o}s and Moser.

\begin{cor}[${k=4}$]\label{cor_k4}
Let $q=p^r\equiv 5 \pmod {8}$ for a prime $p$. Let $\varphi, \chi_4  \in \widehat{\mathbb{F}^{*}_{q}}$ be characters of order $2$ and $4$ respectively.
Write $q=x^2 +y^2$ for integers $x$ and $y$, such that $x \equiv 1 \pmod{4}$, and $p \nmid x$.
Then
\begin{multline*}
\mathcal{K}_4(G_4(q)) =
\frac{q(q-1)}{2^{12}}. 
\Biggl[ 
q^2+2q(5x-21)
+24x^2-150x+241\\
+10 \, q^2 
{_{3}F_2} {\left( \begin{array}{cccc}  \chi_4, & \varphi, &  \varphi \\
\phantom{\chi_4} & \varepsilon, &  \varepsilon \end{array}
\Big| \; 1 \right)}_{q}
\Biggr].
\end{multline*}
\end{cor}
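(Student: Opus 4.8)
The plan is to specialize Theorem~\ref{thm_Main2} to $k=4$ and evaluate the four building blocks it contains. First I would record the elementary constants: $k^6=2^{12}$ gives the denominator, $q-k^2+1=q-15$, $-10(k-1)^2=-90$, and $5k^2(k-1)+1=241$. After this the problem reduces to (i) evaluating the Jacobi-sum quantities $\mathbb{R}_4(q)$, $\mathbb{S}_4(q)$, $\mathbb{S}^{-}_4(q)$ in terms of $q$ and $x$, and (ii) collapsing the residual hypergeometric sum $\sum_{\vec t\in X_4}{}_{3}F_{2}\left(\vec t\,\big|\,1\right)_{q,4}$ down to a single surviving term. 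Throughout I would use that $q\equiv 5\pmod 8$ forces $(q-1)/4$ to be odd, hence $\chi_4(-1)=-1$ while $\varphi(-1)=(-1)^{(q-1)/2}=1$; this sign is exactly the complication flagged in the introduction.

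For $\mathbb{R}_4(q)$ I would list the six admissible pairs $(s,t)\in\{1,2,3\}^2$ with $s+t\not\equiv 0\pmod 4$, namely $(1,1),(1,2),(2,1),(2,3),(3,2),(3,3)$, giving the Jacobi sums $J(\chi_4,\chi_4)$, $J(\chi_4,\varphi)$, $J(\varphi,\chi_4)$, $J(\varphi,\bar\chi_4)$, $J(\bar\chi_4,\varphi)$, $J(\bar\chi_4,\bar\chi_4)$. Using the symmetry $J(A,B)=J(B,A)$ and the conjugation $J(\bar A,\bar B)=\overline{J(A,B)}$, these collapse into real parts. Rewriting everything through Gauss sums and applying $g(\chi)g(\bar\chi)=\chi(-1)q$ together with $g(\varphi)^2=q$, I expect the clean relation $J(\chi_4,\varphi)=\chi_4(-1)\,J(\chi_4,\chi_4)=-J(\chi_4,\chi_4)$. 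Feeding in the classical quartic evaluation $\mathrm{Re}\,J(\chi_4,\chi_4)=-x$ under the normalization $q=x^2+y^2$, $x\equiv 1\pmod 4$, $p\nmid x$ (the prime-power case following from the prime case by Hasse--Davenport), this should yield $\mathbb{R}_4(q)=2x$. The $10\,\mathbb{R}_4(q)^2$ and $5(q-15)\mathbb{R}_4(q)$ contributions then supply the $40x^2$, $10qx$, and $-150x$ pieces of the final polynomial.

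The sums $\mathbb{S}_4(q)$ and $\mathbb{S}^{-}_4(q)$ are the main obstacle. Each ranges over triples $(s,t,v)\in\{1,2,3\}^3$ subject to $s+t,\ v+t,\ v-s\not\equiv 0\pmod 4$, with summand the product $J(\chi_4^s,\chi_4^t)\,J(\bar\chi_4^s,\chi_4^v)$, weighted by $(-1)^{s+t}$ in $\mathbb{S}^{-}_4$. My plan is to enumerate the admissible triples, rewrite each product via $J(A,B)=g(A)g(B)/g(AB)$, and then repeatedly apply $g(\chi)g(\bar\chi)=\chi(-1)q$ and the Hasse--Davenport product relation to reduce each product to a closed form built from $q$, from $J(\chi_4,\chi_4)^2$ and its conjugate, and from $\mathrm{Re}\,J(\chi_4,\chi_4)=-x$. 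Summing and using identities such as $J(\chi_4,\chi_4)^2+\overline{J(\chi_4,\chi_4)}^2=2(x^2-y^2)=2(2x^2-q)$ should express both $\mathbb{S}_4(q)$ and $\mathbb{S}^{-}_4(q)$ as explicit combinations of $q$ and $x^2$. Tracking the $(-1)^{s+t}$ signs carefully, where $\chi_4(-1)=-1$ intervenes, is where errors are most likely.

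Finally, for the residual sum over $X_4$ I would invoke the group action of Section~\ref{sec_Orbits} to partition $X_4$ into orbits on which the values ${}_{3}F_{2}\left(\vec t\,\big|\,1\right)_{q,4}$ agree up to the sign $(-1)^{t_3+t_5}$, cutting the computation to a handful of representatives. For those representatives whose parameters satisfy a balancing or degeneration condition I would apply Greene's reduction and transformation formulae to evaluate them as Jacobi-sum expressions, which then merge into the polynomial part. The representatives that resist reduction should all coincide, up to sign, with the single term $F_0={}_{3}F_{2}\left(\chi_4,\varphi,\varphi;\varepsilon,\varepsilon\,\big|\,1\right)_{q}$ (note that $\vec t=(1,2,2,0,0)\in X_4$ carries sign $(-1)^{2+0}=+1$), and I expect their signed total to be $10F_0$, producing the $10q^2F_0$ term. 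Assembling all contributions and collecting the $x^2$, $qx$, $q$, $x$, and constant terms should return exactly $q^2+2q(5x-21)+24x^2-150x+241+10q^2F_0$ inside the bracket, completing the corollary. The delicate bookkeeping is ensuring that the reducible $X_4$ terms and the $\mathbb{S}$-sums combine into integer polynomial coefficients.
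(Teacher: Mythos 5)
Your overall route is the same as the paper's: specialize Theorem \ref{thm_Main2} to $k=4$, evaluate $\mathbb{R}_4(q)$, $\mathbb{S}_4(q)$, $\mathbb{S}^{-}_4(q)$ via quartic Jacobi sums, and collapse the sum over $X_4$ using the orbit structure of Section \ref{sec_Orbits}. Your derivation $\mathbb{R}_4(q)=2x$ is correct and matches the paper's (\ref{for_R4}), and your outline for $\mathbb{S}_4(q)$ and $\mathbb{S}^{-}_4(q)$ is workable in principle (the paper obtains $\mathbb{S}_4(q)=4x^2-6q$ and $\mathbb{S}^{-}_4(q)=2q-4x^2$ from Propositions \ref{prop_JacXfer}, \ref{prop_JacProdq} and Lemma \ref{lem_JacOrder4}), though you leave that computation unexecuted.

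The genuine gap is in your treatment of the residual sum over $X_4$. There are six orbits, represented by $(1,1,1,0,0)$, $(3,3,3,0,0)$, $(1,3,2,0,0)$, $(1,2,2,0,0)$, $(1,1,3,0,0)$ and $(2,2,2,0,0)$. Four of these contribute net zero after signs, and the orbit of $(1,2,2,0,0)$ contributes $+10\,F_0$ as you claim --- but the orbit of $(2,2,2,0,0)$ contributes the additional term ${_{3}F_2}(\varphi,\varphi,\varphi;\varepsilon,\varepsilon\mid 1)_q$, which is \emph{not} equal to $\pm F_0$ and is \emph{not} reducible by Greene's formulae (\ref{for_3F2Red_1})--(\ref{for_3F2Red_6}): membership in $X_4$ means precisely that no such reduction applies, so your dichotomy ``reducible, hence merged into the polynomial'' versus ``irreducible, hence equal to $\pm F_0$'' breaks down on this element. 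The signed total over $X_4$ is therefore $10F_0 + {_{3}F_2}(\varphi,\varphi,\varphi;\varepsilon,\varepsilon\mid 1)_q$, not $10F_0$. The paper handles the extra term by the separate special-value evaluation \cite[(6.4)]{DM}, valid for $q\equiv 1 \pmod 4$: $q^2\,{_{3}F_2}(\varphi,\varphi,\varphi;\varepsilon,\varepsilon\mid 1)_q = 4x^2-2q$. This is not a cosmetic omission: with your values $\mathbb{R}_4=2x$, $\mathbb{S}_4=4x^2-6q$, $\mathbb{S}^{-}_4=2q-4x^2$, the bracket in Theorem \ref{thm_Main2} assembles to $q^2+2q(5x-20)+20x^2-150x+241+10q^2F_0$, so under your accounting the stated coefficients $-21$ and $24x^2$ cannot arise; they appear exactly because the missing orbit supplies the additional $4x^2-2q$.
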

\noindent 
An algorithm for determining $x$ can be found in \cite{McC11}.
It is interesting to note that, when $q=p$ is prime, the values of the hypergeometric function in Corollary \ref{cor_k4} correspond to the $p$-th Fourier coefficients of a certain non-CM modular form of weight three and level 32 \cite{DM2, MP}.

We will discuss the relationship between the $k$-th power Paley digraphs and multicolor directed Ramsey numbers in Section \ref{sec_Ramsey}. Specifically, we will show that if $\mathcal{K}_m(G_k(q))=0$ for some $q$, then $q<R_{\frac{k}{2}}(m)$. For $k=4$ and $q=5^3$ we get that $x=-11$ and 
$q^2 
{_{3}F_2} {\left( \begin{array}{cccc}  \chi_4, & \varphi, &  \varphi \\
\phantom{\chi_4} & \varepsilon, &  \varepsilon \end{array}
\Big| \; 1 \right)}_{q}
=-142$
and so, by Corollary \ref{cor_k4}, $\mathcal{K}_4(G_4(125)) = 0$. 
Thus, $126 \leq R(4,4)$.
Based on a review of the literature, this seems to be an improvement on the best known lower bound.
Combining with results from \cite{MT} we get the following improved lower bounds for $R_t(4)$ in general. 
See Section \ref{sec_Ramsey} for further details.
\begin{cor}\label{cor_Ramsey4t}
For $t\geq 2$,
$$125 \cdot  7^{t-2} +1 \leq R_t(4).$$
\end{cor}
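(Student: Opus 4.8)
The plan is to prove this by a short induction on $t$, resting on just two inputs: a single explicit base value $R_2(4)\geq 126$ furnished by Corollary \ref{cor_k4}, and a product-type (multiplicative) inequality for diagonal directed Ramsey numbers supplied by \cite{MT}. The induction itself is elementary; all of the genuine mathematical content sits in the base value and in the correct form of the multiplicative relation.

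First I would nail down the base case. Taking $q=5^3=125$ in Corollary \ref{cor_k4}, with $x=-11$ and the stated evaluation $q^2\,{}_3F_2(\cdots)=-142$, one gets $\mathcal{K}_4(G_4(125))=0$. Invoking the implication to be established in Section \ref{sec_Ramsey} — that $\mathcal{K}_m(G_k(q))=0$ forces $q<R_{k/2}(m)$ — with $k=4$ and $m=4$ yields $125<R_2(4)$, i.e.\ $R_2(4)-1\geq 125$. Since $125\cdot 7^{\,2-2}+1=126$, this is exactly the asserted bound at $t=2$. The second building block is the Erd\H{o}s--Moser value, reproved here via Corollary \ref{cor_k2} as $\mathcal{K}_4(G_2(7))=0$, which gives $R_1(4)=R(4)\geq 8$, hence $R_1(4)-1\geq 7$.

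Next I would invoke the multiplicative relation from \cite{MT}. For diagonal directed Ramsey numbers this reads
\[
\bigl(R_a(4)-1\bigr)\bigl(R_b(4)-1\bigr)\;\leq\; R_{a+b}(4)-1
\]
for all positive integers $a,b$; it arises from a lexicographic-product construction that merges a colouring of a tournament on $a$ colours avoiding a monochromatic transitive $\mathcal{K}_4$ with one on $b$ colours. Running the induction, suppose $R_t(4)-1\geq 125\cdot 7^{\,t-2}$ for some $t\geq 2$. Applying the relation with $a=t$, $b=1$ and then $R_1(4)-1\geq 7$ gives
\[
R_{t+1}(4)-1\;\geq\;\bigl(R_t(4)-1\bigr)\bigl(R_1(4)-1\bigr)\;\geq\;125\cdot 7^{\,t-2}\cdot 7\;=\;125\cdot 7^{\,(t+1)-2}.
\]
Combined with the base case $t=2$, this establishes $R_t(4)-1\geq 125\cdot 7^{\,t-2}$, equivalently $125\cdot 7^{\,t-2}+1\leq R_t(4)$, for every $t\geq 2$.

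The main obstacle is not the induction, which is routine, but the two external ingredients on which it pivots: the new zero value $\mathcal{K}_4(G_4(125))=0$, whose verification depends entirely on the explicit hypergeometric/modular evaluation packaged in Corollary \ref{cor_k4}, and the precise normalization of the multiplicative relation in \cite{MT}. I would take particular care that the product inequality is cited in the form $(R_a-1)(R_b-1)\leq R_{a+b}-1$, since an off-by-one in that normalization would alter the base of the exponential away from $7=R_1(4)-1$ and invalidate the stated lower bound.
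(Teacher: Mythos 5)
Your proof is correct and follows essentially the same route as the paper: the base case $R_2(4)\geq 126$ comes from $\mathcal{K}_4(G_4(125))=0$ via Corollary \ref{cor_k4} together with the implication $\mathcal{K}_m(G_k(q))=0 \Rightarrow q<R_{k/2}(m)$ from Section \ref{sec_Ramsey}, and the induction multiplies by $R(4)-1=7$ at each step using the relation (\ref{for_LBMult}) from \cite{MT}. Your citation of the relation in the more general form $(R_a(4)-1)(R_b(4)-1)\leq R_{a+b}(4)-1$ is harmless, since the only case you use, $b=1$, is exactly the paper's $(R_{t-1}(m)-1)(R(m)-1)+1\leq R_t(m)$.
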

\noindent


\subsection{Subtournaments of Order Three}\label{sec_Results_Three}
We have similar results for $\mathcal{K}_3(G_k(q))$ in terms of Jacobi sums.
\begin{theorem}\label{thm_Main3}
Let $k \geq 2$ be an even integer. Let $q$ be a prime power such that $q \equiv k+1 \imod {2k}$. Then
\begin{equation*}
\mathcal{K}_3(G_k(q)) = 
\frac{q(q-1)}{k^3} \, ( \mathbb{R}_k(q) + q -2k+1),
\end{equation*}
where $\mathbb{R}_k(q)$ is as defined in Theorem \ref{thm_Main2}.
\end{theorem}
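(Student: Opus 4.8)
The goal is to count transitive subtournaments of order three in $G_k(q)$, i.e., triples of vertices $\{a,b,c\}$ whose induced directed triangle is transitive rather than cyclic.

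The plan is to count ordered triples and then divide by the appropriate symmetry factor. A transitive triangle on three vertices has a unique source and a unique sink, so I would count ordered triples $(a,b,c)$ that form a transitive tournament with edges $a\to b$, $b\to c$, and $a\to c$; each unordered transitive subtournament corresponds to exactly one such ordered source-to-sink chain, so no further division is needed if I set up the ordering this way. By translation invariance of $G_k(q)$ (the edge relation depends only on differences, and $S_k$ is translation-stable under the vertex set $\mathbb{F}_q$), I can fix $a=0$, count the configurations with $b,c$, and multiply by $q$. So I reduce to counting pairs $(b,c)$ with $b\in S_k$, $c\in S_k$, and $c-b\in S_k$.

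The key step is to detect membership in $S_k$ with multiplicative characters. Since $S_k$ is the group of $k$-th power residues of index $k$, the indicator function of $S_k$ is $\mathbf{1}_{S_k}(x)=\frac{1}{k}\sum_{j=0}^{k-1}\chi_k^j(x)$ for $x\neq 0$ (and it vanishes at $0$ because we set $\chi(0)=0$). Substituting this for each of the three membership conditions turns the count into $\frac{q}{k^3}$ times a triple character sum $\sum_{i,j,\ell}\sum_{b,c}\chi_k^i(b)\chi_k^j(c)\chi_k^\ell(c-b)$. I would first perform the inner sum over $b$ and $c$. The terms where one or more of $i,j,\ell$ vanish are the degenerate pieces: these collapse to counting $S_k$ itself or pairs with one free condition, contributing the polynomial-in-$q$ part. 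The genuinely nontrivial terms, where all three exponents are nonzero, are Jacobi-sum evaluations: after a change of variables the inner double sum $\sum_{b,c}\chi_k^i(b)\chi_k^j(c)\chi_k^\ell(c-b)$ reduces to a product/shift of Jacobi sums $J(\chi_k^s,\chi_k^t)$, matching the definition of $\mathbb{R}_k(q)$.

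The main obstacle is the careful bookkeeping of the degenerate terms and getting the constant $-2k+1$ and the coefficient of $q$ exactly right, rather than any deep difficulty. I must track which of the $k^3$ character-triples are trivial, handle the conventions $\chi(0)=0$ that kill boundary contributions, and correctly account for the congruence constraints $s+t\not\equiv 0\pmod{k}$ that appear in the definition of $\mathbb{R}_k(q)$ (these exclude the cases where a Jacobi sum degenerates to something involving the trivial character or picks up an extra $-1$ from $J(\chi,\bar\chi)=-\chi(-1)$). Since $-1\notin S_k$ here, I expect sign subtleties of exactly the kind the introduction warns about, so I would verify the final count against Corollary~\ref{cor_k2} in the case $k=2$, where $\mathbb{R}_2(q)$ should reduce to a known Jacobi-sum value and the formula must specialize to the classical count of transitive triangles in the Paley tournament.
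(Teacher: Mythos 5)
Your proposal is correct and follows essentially the same route as the paper: the paper also uses the translation automorphism to fix the source at $0$ (reducing $\mathcal{K}_3(G_k(q))$ to $q$ times the number of pairs $b,c$ with $b,c,c-b\in S_k$, phrased there as counting edges of the induced subgraph $H_k(q)$ on the $k$-th power residues), and then evaluates the resulting triple character sum via Jacobi sums, with the degenerate character indices producing the $q-2k+1$ term exactly as you outline. The only cosmetic difference is that the paper computes the out-degree in $H_k(q)$ and multiplies by $\#V(H_k(q))$ rather than summing over both vertices directly, and your consistency check should cite Corollary \ref{cor_3k2} rather than Corollary \ref{cor_k2}.
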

\noindent

\begin{cor}[${k=2}$]\label{cor_3k2}
Let $q\equiv 3 \pmod {4}$ be a prime power. Then
\begin{equation*}
\mathcal{K}_3(G_2(q)) =
\frac{q(q-1)(q-3)}{2^3}. 
\end{equation*}
\end{cor}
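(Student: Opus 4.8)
The plan is to specialize Theorem \ref{thm_Main3} to the case $k=2$ and show that the Jacobi sum term $\mathbb{R}_2(q)$ collapses to a single, explicitly computable value. First I would write out the defining sum for $\mathbb{R}_k(q)$ at $k=2$. Since the indices $s,t$ range over $1 \leq s,t \leq k-1 = 1$ subject to $s+t \not\equiv 0 \imod{2}$, the only candidate pair is $(s,t)=(1,1)$; but $s+t = 2 \equiv 0 \imod 2$ fails the congruence condition, so in fact the sum over this set is \emph{empty}, giving $\mathbb{R}_2(q) = 0$. I would double-check this against the parity conditions: with $\chi_2 = \varphi$ the quadratic character, the excluded term $J(\varphi,\varphi)$ is precisely the one that would otherwise appear, and its exclusion (forced by $-1 \notin S_2$, i.e.\ $q \equiv 3 \pmod 4$) is exactly what makes $\mathbb{R}_2(q)$ vanish.

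With $\mathbb{R}_2(q) = 0$ in hand, the computation is immediate: substituting $k=2$ into the formula of Theorem \ref{thm_Main3} gives
\begin{equation*}
\mathcal{K}_3(G_2(q)) = \frac{q(q-1)}{2^3}\,\bigl( 0 + q - 2\cdot 2 + 1 \bigr) = \frac{q(q-1)(q-3)}{8},
\end{equation*}
which is the claimed expression $\frac{q(q-1)(q-3)}{2^3}$. So the entire corollary reduces to verifying that the defining index set for $\mathbb{R}_2(q)$ is empty and then performing this one-line arithmetic simplification.

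I expect the only real obstacle — and it is a minor bookkeeping point rather than a genuine difficulty — to be confirming that the constant $-2k+1$ evaluates correctly and that there are no overlooked contributions hiding in the degenerate $k=2$ case. In particular one should make sure that the condition $q \equiv 3 \pmod 4$ stated in the corollary is consistent with the hypothesis $q \equiv k+1 \imod{2k}$ of Theorem \ref{thm_Main3}: for $k=2$ this reads $q \equiv 3 \imod 4$, so the hypotheses match exactly and no additional case analysis is needed. Once the emptiness of the sum defining $\mathbb{R}_2(q)$ is noted, the result follows by direct substitution.
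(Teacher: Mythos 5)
Your proposal is correct and follows exactly the paper's own route: the paper likewise observes that $\mathbb{R}_2(q)=0$ because no indices $(s,t)$ satisfy the conditions of the defining sum, and then substitutes $k=2$ into Theorem \ref{thm_Main3}. Your explicit check that the sole candidate pair $(s,t)=(1,1)$ is excluded by $s+t\equiv 0 \pmod 2$ is just a spelled-out version of the same observation, so there is nothing to add.
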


\begin{cor}[${k=4}$]\label{cor_3k4}
Let $q =p^r \equiv 5 \pmod {8}$ for a prime $p$.
Write $q=x^2 +y^2$ for integers $x$ and $y$, such that $x \equiv 1 \pmod{4}$, and $p \nmid x$.
Then
\begin{equation*}
\mathcal{K}_3(G_4(q)) =
\frac{q(q-1)(q+2x-7)}{2^{6}}. 
\end{equation*}
\end{cor}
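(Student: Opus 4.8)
The plan is to specialize Theorem~\ref{thm_Main3} to $k=4$ and reduce the claim to a clean evaluation of $\mathbb{R}_4(q)$. Since $k^3 = 64 = 2^6$ and $2k-1 = 7$, Theorem~\ref{thm_Main3} gives
\[
\mathcal{K}_3(G_4(q)) = \frac{q(q-1)}{2^6}\bigl(\mathbb{R}_4(q) + q - 7\bigr),
\]
so it suffices to prove that $\mathbb{R}_4(q) = 2x$, where $q = x^2 + y^2$ with $x \equiv 1 \pmod 4$ and $p \nmid x$.

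Next I would expand $\mathbb{R}_4(q)$. The pairs $(s,t)$ with $1 \le s,t \le 3$ and $s+t \not\equiv 0 \pmod 4$ are exactly the six pairs other than $(1,3),(2,2),(3,1)$. Writing $\chi := \chi_4$ and $\varphi := \chi^2$ (so that $\chi^3 = \bar{\chi}$), this yields
\[
\mathbb{R}_4(q) = J(\chi,\chi) + J(\chi,\varphi) + J(\varphi,\chi) + J(\varphi,\bar{\chi}) + J(\bar{\chi},\varphi) + J(\bar{\chi},\bar{\chi}).
\]
Using the symmetry $J(A,B) = J(B,A)$ and the conjugation rule $\overline{J(A,B)} = J(\bar{A},\bar{B})$, together with $\bar{\varphi} = \varphi$, the six terms collapse to $\mathbb{R}_4(q) = 2\,\mathrm{Re}\,J(\chi,\chi) + 4\,\mathrm{Re}\,J(\chi,\varphi)$.

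The key step is to relate $J(\chi,\varphi)$ to $J(\chi,\chi)$ through the Gauss sum $g(\chi)$. From the standard identity $g(A)g(B) = J(A,B)\,g(AB)$ one gets $g(\chi)^2 = J(\chi,\chi)\,g(\varphi)$ and $g(\chi)g(\varphi) = J(\chi,\varphi)\,g(\bar{\chi})$; combining these with $g(\bar{\chi}) = \chi(-1)\overline{g(\chi)}$, $|g(\chi)|^2 = q$, and $g(\varphi)^2 = \varphi(-1)q = q$ produces the identity $J(\chi,\varphi) = \chi(-1)\,J(\chi,\chi)$. Because $q \equiv 5 \pmod 8$ we have $\chi(-1) = i^{(q-1)/2} = -1$, so $\mathrm{Re}\,J(\chi,\varphi) = -\mathrm{Re}\,J(\chi,\chi)$ and hence $\mathbb{R}_4(q) = -2\,\mathrm{Re}\,J(\chi,\chi)$. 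It then remains to invoke the classical evaluation of the quartic Jacobi sum: writing $J(\chi,\chi) = a + bi \in \mathbb{Z}[i]$ one has $a^2 + b^2 = q$ with $a$ odd and, in the prime case, $a \equiv -1 \pmod 4$, so that $a = -x$ under the normalization $x \equiv 1 \pmod 4$. This gives $\mathbb{R}_4(q) = -2a = 2x$, which (being the same evaluation that underlies Corollary~\ref{cor_k4}) completes the proof.

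The main obstacle will be pinning down the sign and normalization in the last step. The congruence $J(\chi,\chi) \equiv -1 \pmod{2(1+i)}$ fixes $a \equiv -1 \pmod 4$ only for prime $q = p$; for prime powers $q = p^r$ one must track the correct representation through the Hasse--Davenport lifting relation, which is precisely where the hypothesis $p \nmid x$ is used to single out the intended $x$. Some care is also needed in the grouping step, since only the real parts, and not $J(\chi,\chi)$ itself, are invariant under replacing $\chi$ by $\bar{\chi}$; the derivation must therefore be phrased in terms of $\mathrm{Re}\,J(\chi,\chi)$ throughout so that the final answer is independent of the choice of quartic character.
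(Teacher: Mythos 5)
Your proposal follows the same route as the paper: specialize Theorem \ref{thm_Main3} to $k=4$ (so $k^3=2^6$ and $2k-1=7$) and reduce the corollary to the single evaluation $\mathbb{R}_4(q)=2x$. Your expansion of $\mathbb{R}_4(q)$ into the six Jacobi sums, the collapse to $2\,\mathrm{Re}\,J(\chi,\chi)+4\,\mathrm{Re}\,J(\chi,\varphi)$, and the Gauss-sum identity $J(\chi,\varphi)=\chi(-1)J(\chi,\chi)$ are all correct; note, though, that this last identity is exactly Proposition \ref{prop_JacXfer} with $\psi=\varphi$ (since $\bar{\chi}\bar{\varphi}=\chi_4^{-3}=\chi_4$), so the Gauss-sum detour re-proves a tool the paper already hands you. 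With $\chi_4(-1)=-1$ for $q\equiv 5\pmod 8$, you arrive at $\mathbb{R}_4(q)=-J(\chi_4,\chi_4)-J(\bar{\chi_4},\bar{\chi_4})$, which is precisely the quantity the paper evaluates.

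The one genuine gap is your final step. The congruence $J(\chi,\chi)\equiv -1\pmod{2(1+i)}$ fixes $\mathrm{Re}\,J(\chi,\chi)=-x$ only when $q=p$ is prime, as you yourself acknowledge, while the corollary is stated for all prime powers $q=p^r\equiv 5\pmod 8$; your plan to settle $r>1$ by tracking the representation $q=x^2+y^2$ through the Hasse--Davenport relation is left as a sketch, and carrying it out correctly (in particular, why the condition $p\nmid x$ singles out the intended representation) is not a one-line matter. The paper sidesteps this entirely: Lemma \ref{lem_JacOrder4}(1), quoted from \cite{DM, KR}, asserts $J(\chi_4,\chi_4)+J(\bar{\chi_4},\bar{\chi_4})=-2x$ for all prime powers $q=p^r\equiv 1\pmod 4$ under exactly the normalization $x\equiv 1\pmod 4$, $p\nmid x$. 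Citing that lemma closes your gap immediately, and your argument then coincides with the paper's proof, which is simply: Proposition \ref{prop_JacXfer} together with Lemma \ref{lem_JacOrder4}(1) give $\mathbb{R}_4(q)=2x$, and Theorem \ref{thm_Main3} with $k=4$ finishes.
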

\noindent 
It is easy to see from Corollaries \ref{cor_3k2} and \ref{cor_3k4} that $\mathcal{K}_3(G_2(3)) = \mathcal{K}_3(G_4(13)) = 0$ which leads to the following corresponding lower bounds for multicolor Ramsey numbers:
$4 \leq R(3)$ and
$14 \leq R(3,3)$.
\noindent
It is known that $R(3)=4$ and $R(3,3)=14$ \cite{BD, MT}. 
However, when $k=6$ we get $44 \leq R_3(3) = R(3,3,3)$, which seems to be an improvement on the best known lower bound from the literature. Combining with results from \cite{MT} we get the following improved lower bounds for $R_t(3)$ in general. See Section \ref{sec_Ramsey} for further details.
\begin{cor}\label{cor_Ramsey3t}
For $t \geq 3$,
$$43 \cdot 3^{t-3} +1  \leq R_t(3).$$
\end{cor}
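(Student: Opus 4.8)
The plan is to prove the bound by induction on $t$, using the single base value $R_3(3) \geq 44$ as the seed and the multiplicative (lexicographic-product) relation for directed Ramsey numbers from \cite{MT} to propagate it across all larger $t$.

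First I would establish the base case $t=3$. By the explicit determination of $\mathcal{K}_3(G_6(q))$ obtained from Theorem \ref{thm_Main3} (specializing $k=6$ and evaluating $\mathbb{R}_6(q)$ at $q=43$), one checks that $\mathcal{K}_3(G_6(43))=0$. The general principle recorded in Section \ref{sec_Ramsey}, namely that $\mathcal{K}_m(G_k(q))=0$ forces $q<R_{\frac{k}{2}}(m)$, then gives $43<R_3(3)$, i.e.
\begin{equation*}
R_3(3)-1 \geq 43 = 43 \cdot 3^{\,3-3}.
\end{equation*}

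Next I would invoke the multiplicative relation from \cite{MT}, which for directed Ramsey numbers takes the form $R_{s+t}(n)-1 \geq \bigl(R_s(n)-1\bigr)\bigl(R_t(n)-1\bigr)$, arising from a blow-up product of two color-avoiding tournaments on $\bigl(R_s(n)-1\bigr)\bigl(R_t(n)-1\bigr)$ vertices. Specializing to $s=1$ and $n=3$ and using the classical single-color value $R_1(3)=R(3)=4$, so that $R_1(3)-1=3$, this reads
\begin{equation*}
R_{t+1}(3)-1 \geq 3\bigl(R_t(3)-1\bigr).
\end{equation*}
The inductive step is then immediate: assuming $R_t(3)-1 \geq 43\cdot 3^{\,t-3}$ for some $t\geq 3$, the displayed relation yields $R_{t+1}(3)-1 \geq 3\cdot 43 \cdot 3^{\,t-3}=43\cdot 3^{\,(t+1)-3}$, which closes the induction and gives $43\cdot 3^{\,t-3}+1 \leq R_t(3)$ for all $t\geq 3$.

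The only genuine obstacle lies in the base case rather than in the induction: one must verify $\mathcal{K}_3(G_6(43))=0$, which requires the full $k=6$ evaluation of the Jacobi-sum quantity $\mathbb{R}_6(q)$ appearing in Theorem \ref{thm_Main3} at $q=43$. Once that single arithmetic fact is in hand, the remainder is a routine iteration of the product bound, with the two governing constants arising respectively as the new $k=6$ value $43$ and as the multiplier $R(3)-1=3$.
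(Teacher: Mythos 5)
Your proposal is correct and follows essentially the same route as the paper: the base case $R_3(3)\geq 44$ comes from the computational verification (via Theorem \ref{thm_Main3} with $k=6$) that $\mathcal{K}_3(G_6(43))=0$ together with the multicolor Paley tournament argument of Section \ref{sec_Ramsey}, and the extension to all $t\geq 3$ is exactly the iteration of the relation (\ref{for_LBMult}) from \cite{MT} with $R(3)=4$. The only cosmetic difference is that you state the \cite{MT} relation in the more general two-parameter form $R_{s+t}(n)-1\geq (R_s(n)-1)(R_t(n)-1)$, but since you only use the $s=1$ case this coincides with what the paper invokes.
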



\section{Preliminaries}\label{sec_Prelim}

\subsection{Jacobi Sums}\label{sec_Prelim_GJSums}
We start by recalling some well-known properties of Jacobi sums. See \cite{BEW} for more details, noting that we have adjusted results therein to account for $\varepsilon(0)=0$. 

\begin{prop}\label{prop_JacBasic}
For non-trivial $\chi \in \widehat{\mathbb{F}^{*}_{q}}$ we have
\begin{itemize}
\item[(a)] $J(\varepsilon,\varepsilon)=q-2$;
\item[(b)] $J(\varepsilon, \chi)=-1$; and
\item[(c)] $J(\chi,\bar{\chi}) = -\chi(-1)$.
\end{itemize}
\end{prop}

\begin{prop}\label{prop_JacXfer}
For $\chi, \psi \in \widehat{\mathbb{F}^{*}_{q}}$,
$J(\chi, \psi) = \chi(-1) J(\chi, \bar{\chi} \bar{\psi})$.
\end{prop}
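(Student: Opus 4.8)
The plan is to prove the identity by a direct change of variables in the defining Jacobi sum, avoiding any appeal to Gauss sums so that the argument applies uniformly to all $\chi,\psi$ (including trivial characters) with no separate case analysis.

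First I would unfold the left-hand side from the definition $J(A,B) = \sum_{a} A(a)B(1-a)$. Because we adopt the convention $A(0) = 0$ for every character, the summand of $J(\chi,\psi)$ vanishes unless $a \notin \{0,1\}$, so I may regard the sum as running over $a \in \mathbb{F}_q \setminus \{0,1\}$. The key idea is then to apply the substitution $a = \frac{b}{b-1}$, a fractional linear map which is an involution of $\mathbb{F}_q \setminus \{1\}$ (its inverse is $b = \frac{a}{a-1}$) and which carries $\mathbb{F}_q \setminus \{0,1\}$ bijectively onto itself, so that reindexing is legitimate. Under this substitution the first argument becomes $\frac{b}{b-1}$ and the second becomes $1 - \frac{b}{b-1} = \frac{-1}{b-1}$.

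Next I would simplify the transformed summand using multiplicativity: $\chi\bigl(\frac{b}{b-1}\bigr) = \chi(b)\bar{\chi}(b-1)$ and $\psi\bigl(\frac{-1}{b-1}\bigr) = \psi(-1)\bar{\psi}(b-1)$. Rewriting $b-1 = -(1-b)$ and using that $\chi(-1) = \bar{\chi}(-1) \in \{\pm 1\}$, so that $\chi(-1)^2 = \psi(-1)^2 = 1$, the factors reorganize into $\chi(-1)\,\chi(b)\,(\bar{\chi}\bar{\psi})(1-b)$; pulling the constant $\chi(-1)$ out of the sum leaves exactly $\chi(-1)J(\chi,\bar{\chi}\bar{\psi})$, which is the claim.

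The main obstacle here is bookkeeping rather than depth: one must select the correct M\"obius substitution (natural candidates such as $a \mapsto 1-a$ or $a \mapsto 1/a$ fail to produce the desired pairing of the arguments) and verify carefully that it is a bijection on the index set consistent with the $\chi(0)=0$ convention, so that no boundary terms are mishandled. As an independent check one could instead route through Gauss sums, using $J(\chi,\psi) = g(\chi)g(\psi)/g(\chi\psi)$ together with $g(\chi)g(\bar{\chi}) = \chi(-1)q$, but that path forces a separate treatment of the cases in which $\chi$, $\psi$, or $\chi\psi$ is trivial, precisely the complications the substitution argument sidesteps.
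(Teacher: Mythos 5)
Your proof is correct. Note that the paper itself offers no proof of this proposition: it is recalled as a well-known property of Jacobi sums, deferred to the reference [BEW] with the remark that results there must be ``adjusted to account for $\varepsilon(0)=0$.'' Your substitution argument supplies exactly what that citation hides, and it is the standard one. The details check out: $b \mapsto b/(b-1)$ is an involution of $\mathbb{F}_q \setminus \{0,1\}$ (and of $\mathbb{F}_q\setminus\{1\}$, fixing $0$); the factorizations $\chi\bigl(\tfrac{b}{b-1}\bigr) = \chi(b)\bar{\chi}(b-1)$ and $\psi\bigl(\tfrac{-1}{b-1}\bigr) = \psi(-1)\bar{\psi}(b-1)$ are right; and writing $b-1 = -(1-b)$ produces the constant $\chi(-1)\psi(-1)^2 = \chi(-1)$, since $\chi(-1),\psi(-1) \in \{\pm 1\}$ and $\bar{\chi}(-1)=\chi(-1)$. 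Your handling of the index set is the one point where the paper's convention matters, and you got it right: because $\chi(0)=0$ for \emph{every} character including $\varepsilon$, the terms $a\in\{0,1\}$ on the left and $b\in\{0,1\}$ on the right vanish identically, so the identity holds uniformly with no case analysis for trivial $\chi$, $\psi$, or $\chi\psi$ --- precisely the adjustment the paper alludes to. Your closing remark is also accurate: the alternative route through Gauss sums, $J(\chi,\psi)=g(\chi)g(\psi)/g(\chi\psi)$ with $g(\chi)g(\bar{\chi})=\chi(-1)q$, is only valid when $\chi$, $\psi$, and $\chi\psi$ are all nontrivial, so it would force exactly the degenerate-case bookkeeping your substitution avoids.
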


\begin{prop}\label{prop_JacProdq}
For non-trivial $\chi, \psi \in \widehat{\mathbb{F}^{*}_{q}}$ with $\chi \psi$ non-trivial,
$J(\chi, \psi) J(\bar{\chi}, \bar{\psi}) =q$.
\end{prop}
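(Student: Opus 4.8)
The plan is to reduce everything to Gauss sums, the standard tool for converting products of Jacobi sums into powers of $q$. Fix any non-trivial additive character $\theta$ of $\mathbb{F}_q$ and, for $\chi \in \widehat{\mathbb{F}^{*}_{q}}$, set $g(\chi) = \sum_{a \in \mathbb{F}_q} \chi(a)\,\theta(a)$. Two classical facts about Gauss sums drive the argument: first, for non-trivial $\chi$ one has $g(\chi)\,g(\bar{\chi}) = \chi(-1)\,q$; second, whenever $\chi$, $\psi$, and $\chi\psi$ are all non-trivial, the Jacobi sum factors as $J(\chi,\psi) = g(\chi)\,g(\psi)/g(\chi\psi)$. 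The non-triviality of $\chi\psi$ --- exactly the hypothesis of the proposition --- is precisely what validates this factorization, since it guarantees $g(\chi\psi)$ is a genuine $\sqrt{q}$-modulus Gauss sum and avoids the degenerate value $g(\varepsilon) = -1$.

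First I would record the same factorization for $J(\bar{\chi},\bar{\psi})$: since $\chi,\psi,\chi\psi$ non-trivial forces $\bar{\chi},\bar{\psi},\overline{\chi\psi}$ non-trivial, we have $J(\bar{\chi},\bar{\psi}) = g(\bar{\chi})\,g(\bar{\psi})/g(\overline{\chi\psi})$. Multiplying the two factorizations gives
\[
J(\chi,\psi)\,J(\bar{\chi},\bar{\psi}) = \frac{g(\chi)\,g(\bar{\chi})\cdot g(\psi)\,g(\bar{\psi})}{g(\chi\psi)\,g(\overline{\chi\psi})}.
\]
Applying $g(\eta)\,g(\bar{\eta}) = \eta(-1)\,q$ to each of the three pairs $\{\chi,\bar{\chi}\}$, $\{\psi,\bar{\psi}\}$, $\{\chi\psi,\overline{\chi\psi}\}$ turns the right-hand side into $\chi(-1)\,q \cdot \psi(-1)\,q \,/\, \bigl((\chi\psi)(-1)\,q\bigr)$. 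Since $(\chi\psi)(-1) = \chi(-1)\,\psi(-1)$ and each $\chi(-1),\psi(-1) \in \{\pm 1\}$, the sign factors cancel, one factor of $q$ survives, and the value $q$ results, as claimed.

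A shortcut worth noting is that, directly from the definition and the fact that $\overline{\chi(a)} = \bar{\chi}(a)$ for every $a$, one has $\overline{J(\chi,\psi)} = \sum_{a} \overline{\chi(a)}\,\overline{\psi(1-a)} = J(\bar{\chi},\bar{\psi})$; hence the product in question is simply $\lvert J(\chi,\psi)\rvert^2$, and the proposition is equivalent to the classical modulus identity $\lvert J(\chi,\psi)\rvert = \sqrt{q}$ in the non-degenerate case. Either way, the only genuine obstacle is establishing the two underlying Gauss-sum lemmas --- the factorization of $J$ and the identity $g(\chi)\,g(\bar{\chi}) = \chi(-1)\,q$; once these are in hand the proposition is a one-line cancellation. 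As the paper already cites \cite{BEW} for the basic theory of these sums, I would invoke these two identities from there rather than reprove them, and devote the write-up solely to the cancellation above.
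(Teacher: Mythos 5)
Your proof is correct. Note, though, that the paper itself offers no proof of this proposition: it is recalled (along with Propositions \ref{prop_JacBasic} and \ref{prop_JacXfer}) as a well-known property of Jacobi sums, with a citation to \cite{BEW}, so there is no internal argument to compare against. Your Gauss-sum derivation is the standard one from that reference: the factorization $J(\chi,\psi)=g(\chi)g(\psi)/g(\chi\psi)$, valid precisely because $\chi$, $\psi$, $\chi\psi$ are all non-trivial, combined with $g(\eta)g(\bar{\eta})=\eta(-1)q$ applied to the three pairs, with the sign factors cancelling since $(\chi\psi)(-1)=\chi(-1)\psi(-1)$. You correctly identify that the hypothesis on $\chi\psi$ is what keeps $g(\chi\psi)$ away from the degenerate value $g(\varepsilon)=-1$, and your observation that the paper's convention $\chi(0):=0$ is harmless here (the $a=0,1$ terms of the Jacobi sum vanish for non-trivial characters) is consistent with the paper's remark about adjusting \cite{BEW} for that convention. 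Your closing shortcut is also valid and arguably cleaner: since $\overline{\chi(a)}=\bar{\chi}(a)$ pointwise, $J(\bar{\chi},\bar{\psi})=\overline{J(\chi,\psi)}$, so the claim is exactly the modulus identity $\lvert J(\chi,\psi)\rvert^{2}=q$. Either route is a legitimate, complete justification; deferring the two underlying Gauss-sum lemmas to \cite{BEW}, as you propose, matches the level of rigor the paper itself adopts.
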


\noindent
Recall, if we let $k \geq 2$ be an integer, $q\equiv 1 \pmod {k}$ be a prime power and $\chi_k \in \widehat{\mathbb{F}^{*}_{q}}$ be a character of order $k$,
then for $b \in \mathbb{F}_q^*$, we have the orthogonal relation \cite[p11]{BEW}
\begin{equation}\label{for_OrthRel}
\frac{1}{k} \sum_{t=0}^{k-1} \chi_k^t(b)
=
\begin{cases}
1 & \textup{ if $b$ is a $k$-th power},\\
0 & \textup{ if $b$ is not a $k$-th power}.
\end{cases}
\end{equation}

We now develop some preliminary results for later use. In addition to $\mathbb{R}_k(q)$, $\mathbb{S}_k(q)$ and $\mathbb{S}^{-}_k(q)$ defined in Theorem \ref{thm_Main2}, we define 
\begin{equation*}
\mathbb{R}^{-}_k(q) := 
\sum_{\substack{s,t=1 \\s+t \not\equiv 0 \, (k)}}^{k-1} 
(-1)^{s+t}
J \left( \chi_k^s, \chi_k^t \right).
\end{equation*}
In Section \ref{sec_ProofThm2}, we will simplify many expressions involving Jacobi sums.  Many of these expressions appear multiple times so we name them and list their simplifications here for ease of reference.
\begin{prop}\label{prop_JtoRS}
Let $k \geq 2$ be an even integer, $q$ be a prime power such that $q \equiv k+1 \imod {2k}$ and $\chi_k \in \widehat{\mathbb{F}^{*}_{q}}$ be a character of order $k$.
\begin{itemize}
\item[(a)] 
$\mathbb{J}_{0}(q,k) :=
\displaystyle \sum_{s,t=0}^{k-1} 
J \left( \chi_k^s, \chi_k^t \right)
=
\mathbb{R}_k(q) + q -2k + 1$;
\item[(b)]
$\mathbb{J}^{-}_{0}(q,k) :=
\displaystyle \sum_{s,t=0}^{k-1} 
(-1)^{s+t}
J \left( \chi_k^s, \chi_k^t \right)
= 
\mathbb{R}^{-}_k(q) + q + 1$;
\item[(c)]
$
\mathbb{JJ}_{0}(q,k) :=
\displaystyle \sum_{s,t,v=0}^{k-1} 
J \left( \chi_k^s, \chi_k^t \right)
J \left(\bar{\chi_k}^s , \chi_k^v \right)
=
\mathbb{S}_k(q) -4 \mathbb{R}_k(q) + q^2 +q(k^2-5k) + k^2+4k-3; 
$
\item[(d)]
$\mathbb{JJ}^{-}_0(q,k):=
\displaystyle \sum_{s,t,v=0}^{k-1} 
(-1)^{s+t}
J \left( \chi_k^s, \chi_k^t \right)
J \left(\bar{\chi_k}^s , \chi_k^v \right)
=
\mathbb{S}^{-}_k(q)  - \mathbb{R}^{-}_k(q)  - 3 \mathbb{R}_k(q) + q^2 -2kq +3(k-1)$;
\item[(e)] 
$\displaystyle \sum_{s,t=0}^{k-1} 
(-1)^{s}
J \left( \chi_k^s, \chi_k^t \right)
= 
\mathbb{J}_{0}(q,k) $;
\item[(f)]
$\displaystyle \sum_{s,t,v=0}^{k-1} 
(-)^{t+v}
J \left( \chi_k^s, \chi_k^t \right)
J \left(\bar{\chi_k}^s , \chi_k^v \right)
=
\mathbb{JJ}_{0}(q,k) +k^2(q-1)$;
\item[(g)]
$\displaystyle \sum_{\substack{s,t,v=1 \\ s+t, v+t, v-s \not\equiv 0 \, (k) }}^{k-1} 
(-1)^{t+v}
J \left( \chi_k^s, \chi_k^t \right)
J \left(\bar{\chi_k}^s , \chi_k^v \right)
=
\mathbb{S}_k(q) + qk(k-2)$;
\end{itemize}
\end{prop}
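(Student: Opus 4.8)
My plan is to reduce the weighted, restricted sum in part (g) to the fully unrestricted sum and then strip off the boundary, reusing the evaluations already recorded in parts (c) and (f). The one genuinely new ingredient, forced on us because $-1\notin S_k$, is the weight $(-1)^{t+v}$, so the first thing I would record is that this sign is a character value: since $q\equiv k+1\imod{2k}$ with $k$ even we have $\tfrac{q-1}{2}\equiv\tfrac{k}{2}\imod{k}$, hence $\chi_k(-1)=-1$ and $\chi_k^{s}(-1)=(-1)^{s}$ for all $s$. Combining the symmetry $J(\chi,\psi)=J(\psi,\chi)$ with the transfer identity of Proposition \ref{prop_JacXfer} then yields the termwise relations $(-1)^{t}J(\chi_k^{s},\chi_k^{t})=J(\chi_k^{t},\chi_k^{-s-t})$ and $(-1)^{v}J(\bar{\chi_k}^{s},\chi_k^{v})=J(\chi_k^{v},\chi_k^{s-v})$, so each weighted summand equals an honest (unweighted) product of Jacobi sums. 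This is the device that absorbs the troublesome sign, and it is exactly what I would use to evaluate the boundary cleanly.

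With the sign understood, the skeleton of the argument is bookkeeping. The full weighted sum over all of $(\mathbb{Z}_k)^{3}$ is precisely the quantity evaluated in part (f), namely $\mathbb{JJ}_{0}(q,k)+k^2(q-1)$, and part (c) rewrites $\mathbb{JJ}_{0}(q,k)$ in terms of $\mathbb{S}_k(q)$ and $\mathbb{R}_k(q)$. The sum in (g) is obtained from this full sum by deleting the six ``walls'' $s\equiv0$, $t\equiv0$, $v\equiv0$, $s+t\equiv0$, $v+t\equiv0$ and $v-s\equiv0$ via inclusion--exclusion. Thus it remains to evaluate the weighted contribution of these walls and to check that, once the value of $\mathbb{JJ}_{0}(q,k)$ is substituted, the stray $\mathbb{R}_k(q)$ terms cancel and the remainder collapses to $qk(k-2)$.

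Evaluating a single wall is routine. On $s\equiv0$, for instance, Proposition \ref{prop_JacBasic}(a),(b) turns the summand into $\bigl[(-1)^{t}J(\varepsilon,\chi_k^{t})\bigr]\bigl[(-1)^{v}J(\varepsilon,\chi_k^{v})\bigr]$, and the one-variable sum $\sum_{t=0}^{k-1}(-1)^{t}J(\varepsilon,\chi_k^{t})=q-1$ follows from $J(\varepsilon,\varepsilon)=q-2$, $J(\varepsilon,\chi_k^{t})=-1$ and $\sum_{t=0}^{k-1}(-1)^{t}=0$ (here $k$ is even). The diagonal walls are where the $\mathbb{R}_k(q)$-type terms are born: on $v-s\equiv0$ the second factor becomes $J(\bar{\chi_k}^{s},\chi_k^{s})=-\bar{\chi_k}^{s}(-1)$ by Proposition \ref{prop_JacBasic}(c), which leaves a weighted copy of $\sum J(\chi_k^{s},\chi_k^{t})$; these are exactly the $\mathbb{R}_k(q)$ contributions that must cancel the $-4\mathbb{R}_k(q)$ coming from part (c). Throughout I would use the orthogonality relation \eqref{for_OrthRel}, together with Proposition \ref{prop_JacProdq} on the overlaps of two walls to produce the factors of $q$.

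The main obstacle is the inclusion--exclusion itself: there are six walls with numerous pairwise and higher intersections, each still carrying the weight $(-1)^{t+v}$, and keeping track of which degenerate Jacobi sum arises on each stratum (a trivial first argument giving $\pm1$ or $q-2$, versus a pairing $J(\chi,\bar{\chi})=-\chi(-1)$, versus a conjugate product $J(\chi,\psi)J(\bar{\chi},\bar{\psi})=q$) is delicate. The arithmetic is engineered so that every $\mathbb{R}_k(q)$ term, along with the bulk of the $q$- and constant-order contributions, cancels against the expansion of $\mathbb{JJ}_{0}(q,k)+k^2(q-1)$, leaving only $\mathbb{S}_k(q)+qk(k-2)$; verifying that cancellation is the real work.
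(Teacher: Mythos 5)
There is a genuine gap, and it is twofold. First, the statement you were asked to prove is the whole of Proposition \ref{prop_JtoRS}, parts (a)--(g), but your proposal addresses only part (g): parts (a)--(f) are never established, they are simply ``reused.'' This matters because your entire reduction runs \emph{through} parts (c) and (f) --- you express the sum in (g) as the full weighted sum of (f) minus the six walls, and then substitute the evaluation of $\mathbb{JJ}_0(q,k)$ from (c) --- so without independent proofs of (a)--(f) the argument is circular as a proof of the proposition. The paper's own proof handles all seven parts uniformly by direct expansion: split off the degenerate indices and apply Propositions \ref{prop_JacBasic}--\ref{prop_JacProdq}; your route for (g) is a legitimate alternative ordering (later parts from earlier ones), but only once the earlier parts are actually proved.

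Second, even for part (g) what you have written is a plan, not a proof. Your preparatory ingredients are all correct and would be needed in any treatment: $\chi_k(-1)=-1$ follows from $q\equiv k+1\imod{2k}$, so the weight is a character value; the termwise identities $(-1)^tJ(\chi_k^s,\chi_k^t)=J(\chi_k^t,\chi_k^{-s-t})$ and $(-1)^vJ(\bar{\chi_k}^s,\chi_k^v)=J(\chi_k^v,\chi_k^{s-v})$ follow from Proposition \ref{prop_JacXfer} and symmetry; the single-wall evaluation $\sum_{t=0}^{k-1}(-1)^tJ(\varepsilon,\chi_k^t)=q-1$ is right; and the diagonal wall $v\equiv s$ does generate the $\mathbb{R}_k(q)$-type terms via Proposition \ref{prop_JacBasic}(c). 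Moreover the arithmetic target is consistent: comparing (f) and (c) with the claimed value in (g), the union of the six walls must contribute exactly $-4\,\mathbb{R}_k(q)+q^2+qk(k-3)+4k-3$, so the cancellation you anticipate is the correct one. But the inclusion--exclusion over the six conditions $s,t,v\equiv 0$, $s+t\equiv 0$, $v+t\equiv 0$, $v-s\equiv 0$ --- with their many coincident and nested intersections, each stratum producing a different degenerate Jacobi sum ($\pm 1$, $q-2$, $-\chi(-1)$, or $q$ via Proposition \ref{prop_JacProdq}) --- is precisely where the whole content of the identity lives, and you explicitly defer it (``verifying that cancellation is the real work''). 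A proof must carry out that computation, or replace it by something slicker; as it stands, nothing in the proposal certifies the values $qk(k-2)$ in (g), or indeed any of the constants in (a)--(f).
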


\begin{proof}
All are a relatively straightforward consequence of Propositions \ref{prop_JacBasic}-\ref{prop_JacProdq}.
\end{proof}

\begin{lemma}[\cite{DM, KR}]\label{lem_JacOrder4}
Let $q=p^r\equiv 1 \pmod {4}$ for a prime $p$. Write $q=x^2 +y^2$ for integers $x$ and $y$, such that $x\equiv 1 \pmod{4}$, and $p \nmid x$ when $p \equiv 1 \imod{4}$. Then
\begin{enumerate}
\item $J(\chi_4,\chi_4) + J(\bar{\chi_4},\bar{\chi_4}) = -2x$; and
\item $J(\chi_4,\chi_4)^2 + J(\bar{\chi_4},\bar{\chi_4})^2 = 2x^2-2y^2 = 4x^2-2q = 2q-4y^2$.
\end{enumerate}
\end{lemma}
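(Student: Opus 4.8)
The plan is to recognize $J(\chi_4,\chi_4)$ as a Gaussian integer of norm $q$ and then fix its real part by an arithmetic congruence. First, since $\chi_4$ has order $4$, every value $\chi_4(a)$ lies in $\{0,\pm1,\pm i\}$, so $J(\chi_4,\chi_4)=\sum_a\chi_4(a)\chi_4(1-a)\in\mathbb{Z}[i]$; write $J(\chi_4,\chi_4)=c+di$ with $c,d\in\mathbb{Z}$. As $\bar{\chi_4}$ is the complex-conjugate character, conjugating each summand gives $J(\bar{\chi_4},\bar{\chi_4})=\overline{J(\chi_4,\chi_4)}=c-di$, so the left-hand side of part (1) equals $2c$. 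Because $\chi_4^2=\varphi$ is the non-trivial quadratic character, Proposition \ref{prop_JacProdq} applies with $\chi=\psi=\chi_4$ and yields $J(\chi_4,\chi_4)\,J(\bar{\chi_4},\bar{\chi_4})=q$, i.e.\ $c^2+d^2=q$.

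This already exhibits $q$ as a sum of two squares, so the unordered pair $\{|c|,|d|\}$ agrees with $\{|x|,|y|\}$: for $q$ prime the representation is unique up to order and sign, while for a proper prime power the hypothesis $p\nmid x$ (when $p\equiv1\imod 4$) singles out the relevant primitive representation, which is also the one carried by the Jacobi sum. Everything in part (1) then reduces to the normalization claim that $c$ is the odd member of the pair, that $p\nmid c$, and that it has the correct sign, namely $c=-x$ (so that $2c=-2x$). Since $x\equiv1\pmod 4$, this amounts to showing $c$ is odd with $c\equiv-1\pmod 4$.

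Determining $c$ modulo $4$ is the main obstacle and the only genuinely arithmetic step. I would approach it through the standard congruences for quartic Jacobi sums in $\mathbb{Z}[i]$. A first, elementary reduction modulo $(1-i)$ is already informative: since $i\equiv1$ and $-1\equiv1\pmod{1-i}$, one has $\chi_4(a)\equiv1\pmod{1-i}$ for every $a\neq0$, so the $q-2$ nonzero summands give $J(\chi_4,\chi_4)\equiv q-2\pmod{1-i}$, which forces exactly one of $c,d$ to be odd. To promote this to the full normalization --- that the real part is the odd one and satisfies $c\equiv-1\pmod 4$ --- I would invoke the classical congruence for $J(\chi_4,\chi_4)$ modulo a higher power of $(1-i)$ (equivalently, Gauss's normalization of the biquadratic character), which is exactly the input supplied by the references \cite{DM, KR}. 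This fixes $c=-x$ unambiguously and completes part (1).

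Part (2) is then a direct computation from part (1). Writing $J(\chi_4,\chi_4)=c+di$ with $c=-x$ and $d^2=y^2$ from the matched pair, I would evaluate
\[
J(\chi_4,\chi_4)^2+J(\bar{\chi_4},\bar{\chi_4})^2=(c+di)^2+(c-di)^2=2(c^2-d^2)=2x^2-2y^2,
\]
and the remaining two expressions follow by substituting $q=x^2+y^2$: replacing $y^2=q-x^2$ gives $4x^2-2q$, and replacing $x^2=q-y^2$ gives $2q-4y^2$. No further input is required beyond part (1).
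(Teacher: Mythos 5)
The paper never actually proves Lemma \ref{lem_JacOrder4}: it is imported verbatim from \cite{DM} and \cite{KR}, so there is no internal argument to compare yours against, and your proposal must be judged as a self-contained proof. Its algebraic frame is correct: $J(\chi_4,\chi_4)\in\mathbb{Z}[i]$; conjugating term by term gives $J(\bar{\chi_4},\bar{\chi_4})=\overline{J(\chi_4,\chi_4)}$; Proposition \ref{prop_JacProdq} gives $c^2+d^2=q$ for $J(\chi_4,\chi_4)=c+di$; and part (2) does follow from part (1) exactly as you compute. You have also correctly located the entire content of the lemma in a single claim: that $c$ is odd, satisfies $c\equiv 3\pmod{4}$, and (for $q=p^r$ with $p\equiv 1\pmod{4}$) is prime to $p$.

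But that claim is precisely what you do not prove. For the normalization you appeal to ``the classical congruence \dots modulo a higher power of $(1-i)$'' and declare it to be the input supplied by the references; so the one step carrying genuine arithmetic content (classically, $J(\chi_4,\chi_4)\equiv -1 \pmod{(1+i)^3}$ for $q=p$ prime) is cited rather than established, while your elementary reduction modulo $(1-i)$ only recovers parity information already forced by $c^2+d^2=q$ being odd. Second, for proper prime powers the phrase ``which is also the one carried by the Jacobi sum'' hides a real theorem: that $p\nmid c$ when $p\equiv 1\pmod{4}$. This does not follow from $c^2+d^2=q$; it requires knowing the prime factorization of $J(\chi_4,\chi_4)$ in $\mathbb{Z}[i]$, e.g.\ via Hasse--Davenport (every character of order $4$ on $\mathbb{F}_{p^r}$ is a norm-lift of one on $\mathbb{F}_p$, whence $J(\chi_4,\chi_4)=(-1)^{r-1}J(\chi_4',\chi_4')^r$ is, up to sign, a power of a single prime above $p$, so $p$ cannot divide it) or via Stickelberger --- which is exactly the content of \cite{KR}. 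You also never separate the case $p\equiv 3\pmod{4}$, where $r$ is even, $y=0$, the representation $q=x^2+y^2$ is imprimitive, and one must still show that $J(\chi_4,\chi_4)$ is real with the correct sign. So: the skeleton is sound and the crux is correctly identified, but as a proof the proposal reduces the lemma to the literature rather than establishing it --- which, to be fair, is all the paper itself does.
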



\subsection{Properties of Finite Field Hypergeometric Functions}\label{sec_Prelim_HypFns}
Our most general results from Section \ref{sec_Results} are given in terms of Greene's finite field hypergeometric functions.
These functions can be expressed as character sums in a simple way \cite[Def 3.5 (after change of variable), Cor 3.14]{G2}.
For characters $A, B, C, D, E$ of $\mathbb{F}_{q}^*$,
\begin{equation}\label{for_CharSum2F1}
q \, {_{2}F_1} {\left( \begin{array}{cc} A, & B \\
\phantom{A} & C \end{array}
\Big| \; \lambda \right)}_{q}
= \sum_{b \in \mathbb{F}_q} A\bar{C}(b) \bar{B}C(1-b)  \bar{A} (b- \lambda)
\end{equation}
and
\begin{equation}\label{for_CharSum3F2}
q^2 \, {_{3}F_2} {\left( \begin{array}{ccc} A, & B, &  C \\
\phantom{A} & D, & E \end{array}
\Big| \; \lambda \right)}_{q}
=  \sum_{a,b \in \mathbb{F}_q} A\bar{E}(a) \bar{C}E(1-a) B(b) \bar{B}D(b-1) \bar{A} (a- \lambda b)
\end{equation}
The following reduction formulae \cite[Thms 3.15 \& 4.35]{G2} will play an important part in proving Theorem \ref{thm_Main2}.
\begin{align}
\label{for_3F2Red_1}
{_{3}F_2} {\left( \begin{array}{ccc} \varepsilon, & B, &  C \\
\phantom{A} & D, & E \end{array}
\Big| \; 1 \right)}_{q}
=&
-\frac{1}{q} \,
{_{2}F_1} {\left( \begin{array}{cc} B\bar{D}, &  C\bar{D} \\
\phantom{B\bar{D}} & E\bar{D} \end{array}
\Big| \; 1 \right)}_{q}
+ \binom{B}{D} \binom{C}{E};
\\[6pt] \label{for_3F2Red_2}
{_{3}F_2} {\left( \begin{array}{ccc} A, & \varepsilon, &  C \\
\phantom{A} & D & E \end{array}
\Big| \; 1 \right)}_{q}
=&
A(-1) \binom{D}{A}
{_{2}F_1} {\left( \begin{array}{cc} A\bar{D}, &  C\bar{D} \\
\phantom{A\bar{D}} & E\bar{D} \end{array}
\Big| \; 1 \right)}_{q}
-\frac{D(-1)}{q} \binom{C}{E};
\\[6pt] \label{for_3F2Red_3}
{_{3}F_2} {\left( \begin{array}{ccc} A, & B, &  C \\
\phantom{A} & A, & E \end{array}
\Big| \; 1 \right)}_{q}
=&
\binom{B}{A}
{_{2}F_1} {\left( \begin{array}{cc} B, &  C\\
\phantom{B} & E \end{array}
\Big| \; 1 \right)}_{q}
-\frac{\bar{A}(-1)}{q} \binom{C\bar{A}}{E\bar{A}};
\\[6pt] \label{for_3F2Red_4}
{_{3}F_2} {\left( \begin{array}{ccc} A, & B, &  C \\
\phantom{A} & B, & E \end{array}
\Big| \; 1 \right)}_{q}
=&
-\frac{1}{q} \,
{_{2}F_1} {\left( \begin{array}{cc} A, &  C \\
\phantom{A} & E \end{array}
\Big| \; 1 \right)}_{q}
+ \binom{A\bar{B}}{\bar{B}} \binom{C\bar{B}}{E\bar{B}};
\\[6pt] \label{for_3F2Red_5}
{_{3}F_2} {\left( \begin{array}{ccc} A, & B, &  C \\
\phantom{A} & D, & B \end{array}
\Big| \; 1 \right)}_{q}
=&
\binom{C\bar{D}}{B\bar{D}}
{_{2}F_1} {\left( \begin{array}{cc} A, &  C\\
\phantom{A} & D \end{array}
\Big| \; 1 \right)}_{q}
-\frac{BD(-1)}{q} \binom{A\bar{B}}{\bar{B}}; \, \textup{and}
\\[6pt] \label{for_3F2Red_6}
{_{3}F_2} {\left( \begin{array}{ccc} A, & B, &  C \\
\phantom{A} & D, & ABC\bar{D} \end{array}
\Big| \; 1 \right)}_{q}
&=
BC(-1)\binom{C}{D\bar{A}}\binom{B}{D\bar{C}}
-\frac{BD(-1)}{q} \binom{D\bar{B}}{A}.
\end{align}
We can further reduce the ${_{2}F_1}( \cdot |1)$'s that appear in (\ref{for_3F2Red_1})-(\ref{for_3F2Red_5}) via \cite[Theorem 4.9]{G2},
\begin{equation}\label{for_2F1Red}
{_{2}F_1} {\left( \begin{array}{cc} A, &  B \\
\phantom{A} & C \end{array}
\Big| \; 1 \right)}_{q}
= A(-1) \binom{B}{\bar{A}C}.
\end{equation}
And, we have the following relation which easily follows from their definition,
\begin{equation}\label{for_3F2Per}
{_{3}F_2} {\left( \begin{array}{ccc} A, & B, &  C \\
\phantom{A} & D, & E \end{array}
\Big| \; 1 \right)}_{q}
=
{_{3}F_2} {\left( \begin{array}{ccc} A, & C, &  B \\
\phantom{A} & E, & D \end{array}
\Big| \; 1 \right)}_{q}.
\end{equation}

We also have transformation formulae for finite field hypergeometric functions that relate ${_{3}F_2}( \cdot |1)$ functions with different parameters \cite[Thms 5.14, 5.18 \& 5.20]{G}.  
\begin{align}
\label{for_3F2_T1}
{_{3}F_2} {\left( \begin{array}{ccc} A, & B, &  C \\
\phantom{A} & D, & E \end{array}
\Big| \; 1 \right)}_{q}
&=
{_{3}F_2} {\left( \begin{array}{ccc} B\bar{D}, & A\bar{D}, &  C\bar{D} \\
\phantom{A} & \bar{D}, & E\bar{D} \end{array}
\Big| \; 1 \right)}_{q};
\\[6pt] \label{for_3F2_T2}
{_{3}F_2} {\left( \begin{array}{ccc} A, & B, &  C \\
\phantom{A} & D, & E \end{array}
\Big| \; 1 \right)}_{q}
&=
ABCDE(-1) \,
{_{3}F_2} {\left( \begin{array}{ccc} A, & A\bar{D}, &  A\bar{E} \\
\phantom{A} & A\bar{B}, & A\bar{C} \end{array}
\Big| \; 1 \right)}_{q};
\\[6pt] \label{for_3F2_T3}
{_{3}F_2} {\left( \begin{array}{ccc} A, & B, &  C \\
\phantom{A} & D, & E \end{array}
\Big| \; 1 \right)}_{q}
&=
ABCDE(-1) \,
{_{3}F_2} {\left( \begin{array}{ccc} B\bar{D}, & B, &  B\bar{E} \\
\phantom{A} & B\bar{A}, & B\bar{C} \end{array}
\Big| \; 1 \right)}_{q};
\\[6pt] \label{for_3F2_T4}
{_{3}F_2} {\left( \begin{array}{ccc} A, & B, &  C \\
\phantom{A} & D, & E \end{array}
\Big| \; 1 \right)}_{q}
&=
AE(-1) \,
{_{3}F_2} {\left( \begin{array}{ccc} A, & B, &  \bar{C}E \\
\phantom{A} & AB\bar{D}, & E \end{array}
\Big| \; 1 \right)}_{q};
\\[6pt] \label{for_3F2_T5}
{_{3}F_2} {\left( \begin{array}{ccc} A, & B, &  C \\
\phantom{A} & D, & E \end{array}
\Big| \; 1 \right)}_{q}
&=
AD(-1) \,
{_{3}F_2} {\left( \begin{array}{ccc} A, & D\bar{B}, &  C \\
\phantom{A} & D, & AC\bar{E} \end{array}
\Big| \; 1 \right)}_{q};
\\[6pt] \label{for_3F2_T6}
{_{3}F_2} {\left( \begin{array}{ccc} A, & B, &  C \\
\phantom{A} & D, & E \end{array}
\Big| \; 1 \right)}_{q}
&=
B(-1) \,
{_{3}F_2} {\left( \begin{array}{ccc} \bar{A}D, & B, &  C \\
\phantom{A} & D, & BC\bar{E} \end{array}
\Big| \; 1 \right)}_{q}; \, \textup{and}
\\[6pt] \label{for_3F2_T7}
{_{3}F_2} {\left( \begin{array}{ccc} A, & B, &  C \\
\phantom{A} & D, & E \end{array}
\Big| \; 1 \right)}_{q}
&=
AB(-1) \,
{_{3}F_2} {\left( \begin{array}{ccc} \bar{A}D, & \bar{B}D, &  C \\
\phantom{A} & D, & \bar{AB}DE \end{array}
\Big| \; 1 \right)}_{q}.
\end{align}


\section{Induced subgraphs of $G_k(q)$ and Proofs of Theorems \ref{thm_Main1} and \ref{thm_Main3}}\label{sec_Subgraphs}
In this section, we induce two subgraphs of the $k$-th power Paley digraph $G_k(q)$ and relate the number of transitive subtournaments of a given order for each graph, which we use to prove Theorems \ref{thm_Main1} and \ref{thm_Main3}. 
For a graph $G$, we denote its vertex set by $V(G)$ and its edge set by $E(G)$, so the order of $G$ is $\#V(G)$ and the size of $G$ is $\#E(G)$. For a given vertex $a$ of $G$ we denote the in-degree and out-degree of $a$ in $G$ by $\indeg_G(a)$ and $\outdeg_G(a)$ respectively. 

It is easy to see from its definition that $\#V(G_k(q)) =q$, $\indeg_{G_k(q)}(a) = \outdeg_{G_k(q)}(a) = \frac{q-1}{k}$ for all vertices $a$, and, consequently, $\#E(G_k(q)) = \frac{q(q-1)}{k}$.

Let $H_k(q)$ be the induced subgraph of $G_k(q)$ whose vertex set is $S_k$, the set of $k$-th power residues of $\mathbb{F}_q$, which are the out-neighbors of zero in $G_k(q)$. Therefore, $\#V(H_k(q))=|S_k| = \frac{q-1}{k}$. 
Now 
$$a \to b \in E(H_k(q)) \Longleftrightarrow \chi_k(a)=\chi_k(b)=\chi_k(b-a)=1.$$
So, for $a \in V(H_k(q))$, using (\ref{for_OrthRel}), we get that
\begin{align*}
\outdeg_{H_k(q)}(a) 
&= \frac{1}{k^2} \sum_{b \in \mathbb{F}_q^* \setminus \{a\}} \sum_{s=0}^{k-1} \chi_k^s(b) \sum_{t=0}^{k-1} \chi_k^t(b-a)\\
&= \frac{1}{k^2} \sum_{s,t=0}^{k-1} \chi_k^{t}(-1) \, \chi_k^{s+t}(a) J(\chi_k^s, \chi_k^t)\\
&= \frac{1}{k^2} \, \mathbb{J}_0(q, k)\\
&= \frac{1}{k^2} ( \mathbb{R}_k(q) + q -2k+1) \qquad \textup{(by Prop. \ref{prop_JtoRS})}.
\end{align*}
Similarly, $\indeg_{H_k(q)}(a) = \outdeg_{H_k(q)}(a).$
These degrees are independent of $a$ so 
\begin{align*}
\#E(H_k(q))
&= \#V(H_k(q)) \cdot \outdeg_{H_k(q)}(a)\\
&= \frac{q-1}{k^3} \, \mathbb{J}_0(q, k)\\
&= \frac{q-1}{k^3} \, ( \mathbb{R}_k(q) + q -2k+1) .
\end{align*}

Let $H^{1}_k(q)$ be the induced subgraph of $H_k(q)$ whose vertex set is the set of out-neighbors of 1 in $H_k(q)$. Therefore 
$$\#V(H^{1}_k(q)) = \outdeg_{H_k(q)}(1) =  \frac{1}{k^2} \, \mathbb{J}_0(q, k) = \frac{1}{k^2} ( \mathbb{R}_k(q) + q - 2k+1),$$
and 
$$a \to b \in E(H^{1}_k(q)) \Longleftrightarrow \chi_k(a)=\chi_k(b)=\chi_k(a-1)=\chi_k(b-1)=\chi_k(b-a)=1.$$
Again using (\ref{for_OrthRel}), and noting that $\chi_k(-1)=-1$, we get that for $a \in V(H^{1}_k(q))$,
\begin{align*}
\outdeg_{H^{1}_k(q)}(a) 
&= \frac{1}{k^3} \sum_{b \in \mathbb{F}_q^* \setminus \{1,a\}} \sum_{t_1=0}^{k-1} \chi_k^{t_1}(b) \sum_{t_2=0}^{k-1} \chi_k^{t_2}(b-1) \sum_{t_3=0}^{k-1} \chi_k^{t_3}(b-a)\\
&= \frac{1}{k^3} \sum_{t_1,t_2,t_3=0}^{k-1}   \sum_{b \in \mathbb{F}_q^* \setminus \{1,a\}}  \chi_k^{t_1-t_3}(b) \,  \chi_k^{t_3-t_2}(b-1) \, \chi_k^{-t_1}(b-a)\\
&= \frac{1}{k^3} \sum_{t_1,t_2,t_3=0}^{k-1} 
(-1)^{t_2+t_3} \,
q \, {_{2}F_1} {\left( \begin{array}{ccc} \chi_k^{t_1}, & \chi_k^{t_2} \\[0.05in]
\phantom{\chi_k^{t_1}} & \chi_k^{t_3}  \end{array}
\Big| \; a \right)}_{q}, \qquad \textup{(using (\ref{for_CharSum2F1})),}
\end{align*}
where we have used a change of variables to get the second line. Finally, we get that
\begin{align*}
\#E(H^{1}_k(q))
&= \sum_{a \in V(H^{1}_k(q))} \outdeg_{H^{1}_k(q)}(a) \\
&=  \frac{1}{k^3} \sum_{\substack{a \in \mathbb{F}_q \\ \chi_k(a)=\chi_k(a-1)=1}}  \sum_{t_1,t_2,t_5=0}^{k-1}   \sum_{b \in \mathbb{F}_q^* \setminus \{1,a\}}  \chi_k^{t_1}(b) \,  \chi_k^{t_2}(b-1) \, \chi_k^{t_5}(b-a)\\
&=  \frac{1}{k^5}  \sum_{t_1,t_2,t_3,t_4,t_5=0}^{k-1}  \sum_{\substack{a,b \in \mathbb{F}_q^* \setminus \{1\} \\ a\neq b}}
\chi_k^{t_1}(b) \,  \chi_k^{t_2}(b-1) \, \chi_k^{t_3}(a) \,  \chi_k^{t_4}(a-1) \, \chi_k^{t_5}(b-a) \\
&=  \frac{1}{k^5}  \sum_{t_1,t_2,t_3,t_4,t_5=0}^{k-1}  
\sum_{a,b \in \mathbb{F}_q}
\chi_k^{t_1-t_5}(b) \,  \chi_k^{t_5-t_3}(b-1) \, \chi_k^{t_2}(a) \,  \chi_k^{t_4-t_2}(a-1) \,  \chi_k^{-t_1}(b-a)\\
&=  \frac{1}{k^5}  \sum_{t_1,t_2,t_3,t_4,t_5=0}^{k-1} 
(-1)^{t_3+t_5} \, q^2 \,
{_{3}F_2}\biggl( \begin{array}{ccc} \chi_k^{t_1}, & \chi_k^{t_2}, & \chi_k^{t_3} \vspace{.05in}\\
\phantom{\chi_k^{t_1}} & \chi_k^{t_4}, & \chi_k^{t_5} \end{array}
\Big| \; 1 \biggr)_{q}, \qquad \textup{(using (\ref{for_CharSum3F2})).}
\end{align*}
So we have proved the following proposition.
\begin{prop}\label{prop_Subgraph}
Let $k \geq 2$ be an even integer. Let $q$ be a prime power such that $q \equiv k+1 \imod {2k}$. Let $H_k(q)$ be the induced subgraph of the $k$-th power Paley digraph $G_k(q)$ whose vertex set is the set of $k$-th power residues of $\mathbb{F}_q$. Let $H^{1}_k(q)$ be the induced subgraph of $H_k(q)$ whose vertex set is the set of out-neighbors of 1 in $H_k(q)$. Then
\begin{enumerate}[\bf (a) ]
\item $\#V(H_k(q)) = \frac{q-1}{k};$\\[-6pt]
\item For $a \in V(H_k(q))$, $\indeg_{H_k(q)}(a)  = \outdeg_{H_k(q)}(a) = \frac{1}{k^2} \, \mathbb{J}_0(q, k) = \frac{1}{k^2} ( \mathbb{R}_k(q) + q -2k+1)$;\\
\item $\#E(H_k(q)) = \frac{q-1}{k^3} \, \mathbb{J}_0(q, k) = \frac{q-1}{k^3} \, ( \mathbb{R}_k(q) + q -2k+1)$;\\
\item $\#V(H^{1}_k(q)) = \frac{1}{k^2} \, \mathbb{J}_0(q, k) = \frac{1}{k^2} ( \mathbb{R}_k(q) + q -2k+1)$;\\
\item For $a \in V(H^{1}_k(q))$, $\deg_{H^{1}_k(q)}(a) = \displaystyle \frac{1}{k^3} \sum_{t_1,t_2,t_3=0}^{k-1} 
 (-1)^{t_2+t_3} \, q \, {_{2}F_1} {\left( \begin{array}{ccc} \chi_k^{t_1}, & \chi_k^{t_2} \\[0.05in]
\phantom{\chi_k^{t_1}} & \chi_k^{t_3}  \end{array}
\Big| \; a \right)}_{q}$; and\\
\item $\#E(H^{1}_k(q)) = 
\displaystyle  \frac{1}{k^5} \sum_{t_1,t_2,t_3,t_4,t_5=0}^{k-1}
(-1)^{t_3+t_5} \, q^2
{_{3}F_2}\biggl( \begin{array}{ccc} \chi_k^{t_1}, & \chi_k^{t_2}, & \chi_k^{t_3} \vspace{.05in}\\
\phantom{\chi_k^{t_1}} & \chi_k^{t_4}, & \chi_k^{t_5} \end{array}
\Big| \; 1 \biggr)_{q}$.
\end{enumerate}
\end{prop}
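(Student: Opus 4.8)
The plan is to use the orthogonality relation (\ref{for_OrthRel}) as an indicator for membership in $S_k$: for $b \in \mathbb{F}_q^*$, the average $\frac{1}{k}\sum_{t=0}^{k-1}\chi_k^t(b)$ equals $1$ when $b$ is a $k$-th power and $0$ otherwise. Part (a) then needs no computation, since $V(H_k(q)) = S_k = \langle \omega^k\rangle$ has order $\frac{q-1}{k}$. For every remaining part the strategy is the same three-step routine: first encode each ``is a $k$-th power'' constraint defining an edge by an orthogonality sum; then interchange the order of summation so that the inner sum runs over $\mathbb{F}_q$; and finally recognize that inner sum as a Jacobi sum or one of Greene's hypergeometric character sums (\ref{for_CharSum2F1})--(\ref{for_CharSum3F2}).

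For part (b), fix $a \in V(H_k(q))$; a vertex $b$ is an out-neighbor exactly when $b \in S_k$ and $b - a \in S_k$, the condition $a \in S_k$ being automatic. Encoding these two constraints by orthogonality sums in indices $s$ and $t$ and interchanging, the inner sum $\sum_b \chi_k^s(b)\,\chi_k^t(b-a)$ reduces, after the substitution $b = ac$, to $\chi_k^{s+t}(a)\,\chi_k^t(-1)\,J(\chi_k^s,\chi_k^t)$. Since $a \in S_k$ forces $\chi_k^{s+t}(a) = 1$, the out-degree collapses to $\frac{1}{k^2}\sum_{s,t}(-1)^t J(\chi_k^s,\chi_k^t)$, which equals $\frac{1}{k^2}\mathbb{J}_0(q,k)$ by the symmetry in Proposition \ref{prop_JtoRS}(e) and is rewritten as $\frac{1}{k^2}(\mathbb{R}_k(q) + q - 2k + 1)$ by Proposition \ref{prop_JtoRS}(a). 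The in-degree is computed by the identical steps with $b - a$ replaced by $a - b$; here no sign appears and the value $\frac{1}{k^2}\mathbb{J}_0(q,k)$ emerges directly, so $\indeg_{H_k(q)}(a) = \outdeg_{H_k(q)}(a)$. As this common degree is independent of $a$, part (c) follows from $\#E(H_k(q)) = \#V(H_k(q))\cdot\outdeg_{H_k(q)}(a)$, and part (d) is immediate from the definition of $V(H^1_k(q))$ as the out-neighbors of $1$, giving $\#V(H^1_k(q)) = \outdeg_{H_k(q)}(1)$.

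Parts (e) and (f) carry the real content. For (e), with $a \in V(H^1_k(q))$ fixed, so that $a, a-1 \in S_k$ already hold, a vertex $b$ is an out-neighbor exactly when $b, b-1, b-a \in S_k$; three orthogonality sums followed by an interchange produce $\frac{1}{k^3}\sum_{t_1,t_2,t_3}\sum_b \chi_k^{t_1}(b)\,\chi_k^{t_2}(b-1)\,\chi_k^{t_3}(b-a)$. A bijective relabeling of $(t_1,t_2,t_3) \in (\mathbb{Z}_k)^3$ then aligns the inner sum with the right-hand side of (\ref{for_CharSum2F1}) at $\lambda = a$; rewriting $\chi_k^{\bullet}(b-1)$ as $\chi_k^{\bullet}(-(1-b))$ produces the sign $(-1)^{t_2+t_3}$ through $\chi_k(-1) = -1$, yielding the stated formula. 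For (f), I sum this out-degree over $a \in V(H^1_k(q))$, detecting the defining conditions $\chi_k(a) = \chi_k(a-1) = 1$ by two further orthogonality sums; after interchange this is a single five-fold character sum over $a, b \in \mathbb{F}_q$ encoding all five membership constraints, and the analogous relabeling identifies the inner double sum with (\ref{for_CharSum3F2}) for $q^2\,{}_3F_2(\,\cdots \mid 1)$, the sign now being $(-1)^{t_3+t_5}$.

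The genuine work, and the only place errors are likely, lies in the bookkeeping of (e) and (f): choosing the correct bijective relabeling of the exponent vector so that the inner $\mathbb{F}_q$-sum matches Greene's definitions exactly, and tracking the $\chi_k(-1) = -1$ sign that appears each time an argument $b - c$ is flipped to $-(c - b)$. One loose end to dispatch cleanly is the passage from the restricted ranges (with $b \ne 0, 1, a$, and in (f) also $a \ne 0, 1$ and $a \ne b$) to unrestricted sums over $\mathbb{F}_q$: each excluded point forces one of $b, b-1, b-a, a, a-1$ to vanish, and since $\chi_k(0) = 0$ (with $\varepsilon(0) = 0$ included) those summands are already zero, so enlarging the range introduces no correction terms.
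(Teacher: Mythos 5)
Your proposal is correct and follows essentially the same route as the paper: orthogonality sums encoding the $k$-th power conditions, interchange of summation, and identification of the inner sums with Jacobi sums and the character-sum forms (\ref{for_CharSum2F1})--(\ref{for_CharSum3F2}), with the $(-1)$ signs arising from $\chi_k(-1)=-1$. Your explicit handling of the exponent relabeling and of extending restricted sums over $b$ (and $a$) to all of $\mathbb{F}_q$ via $\chi_k(0)=0$ only makes explicit what the paper leaves implicit.
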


Next we relate the number of transitive subtournaments of a certain order of $G_k(q)$ to those of $H_k(q)$ and $H_k^{1}(q)$.
\begin{lemma}\label{lem_Subgraph}
Let $k, q, G_k(q), H_k(q)$ and $H^{1}_k(q)$ be defined as in Proposition \ref{prop_Subgraph}.
Then, for $m$ a positive integer,
\begin{enumerate}[\bf (a) ]
\item $\mathcal{K}_{m+1}(G_k(q)) = q \, \mathcal{K}_{m}(H_k(q))$; and
\item $\mathcal{K}_{m+1}(H_k(q)) = \frac{q-1}{k} \, \mathcal{K}_{m}(H^{1}_k(q))$.
\end{enumerate}
So for $m \geq 2$
\begin{enumerate}[\bf (c) ]
\item $\mathcal{K}_{m+1}(G_k(q)) = \frac{q(q-1)}{k} \, \mathcal{K}_{m-1}(H^{1}_k(q)).$
\end{enumerate}
\end{lemma}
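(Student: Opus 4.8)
The plan is to exploit the vertex-transitivity of the relevant Cayley digraphs together with the elementary fact that every transitive subtournament has a unique \emph{source} vertex, namely the vertex that dominates all the others within the subtournament (equivalently, the maximum in the unique topological ordering of the acyclic transitive tournament). For part \textbf{(a)} I would first recall that $G_k(q)$ is the Cayley digraph on the additive group $\mathbb{F}_q$ with connection set $S_k$, so each translation $x \mapsto x + c$ is a digraph automorphism. Given any vertex $v_0$, the out-neighborhood of $v_0$ is $v_0 + S_k$, and the translation $x \mapsto x - v_0$ carries it onto $S_k = V(H_k(q))$; since edges of $G_k(q)$ depend only on differences, this translation is an isomorphism of the induced subgraphs. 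Hence the subgraph induced on the out-neighbors of \emph{any} vertex is isomorphic to $H_k(q)$.

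Next I would set up the counting bijection. Writing a transitive subtournament of order $m+1$ as a chain $v_0 \to v_1 \to \cdots \to v_m$, the vertex $v_0$ is its unique source and $\{v_1,\dots,v_m\}$ is a transitive subtournament of order $m$ lying entirely within the out-neighborhood of $v_0$; conversely, prepending the source $v_0$ to any transitive $m$-subtournament among its out-neighbors recovers a transitive $(m+1)$-subtournament with source $v_0$. This is a bijection between transitive $(m+1)$-subtournaments and pairs consisting of a source vertex $v_0$ and a transitive $m$-subtournament in the out-neighborhood of $v_0$. Summing over the $q$ choices of $v_0$ and using the isomorphism above, each inner count equals $\mathcal{K}_m(H_k(q))$, giving $\mathcal{K}_{m+1}(G_k(q)) = q\,\mathcal{K}_m(H_k(q))$.

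For part \textbf{(b)} the same two-step argument applies to $H_k(q)$, the only new ingredient being that $H_k(q)$ is vertex-transitive under a multiplicative action. Because $S_k$ is a subgroup of $\mathbb{F}_q^{*}$, multiplication by any $c \in S_k$ preserves both membership in $S_k$ and the edge condition (since $cb - ca = c(b-a) \in S_k$ iff $b-a \in S_k$), so it is an automorphism of $H_k(q)$; moreover $1 \in S_k$. For $v_0 \in S_k$, multiplication by $v_0^{-1} \in S_k$ carries the out-neighborhood of $v_0$ in $H_k(q)$ isomorphically onto the out-neighborhood of $1$, i.e.\ onto $V(H^{1}_k(q))$. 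The unique-source bijection then yields $\mathcal{K}_{m+1}(H_k(q)) = \#V(H_k(q))\cdot \mathcal{K}_m(H^{1}_k(q)) = \frac{q-1}{k}\,\mathcal{K}_m(H^{1}_k(q))$ by Proposition \ref{prop_Subgraph}(a). Finally, part \textbf{(c)} follows for $m \geq 2$ by substituting \textbf{(b)} with $m$ replaced by $m-1$ into \textbf{(a)}, namely $\mathcal{K}_{m+1}(G_k(q)) = q\,\mathcal{K}_m(H_k(q)) = \frac{q(q-1)}{k}\,\mathcal{K}_{m-1}(H^{1}_k(q))$.

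The main obstacle is conceptual rather than computational: one must verify cleanly that both $G_k(q)$ and $H_k(q)$ are vertex-transitive under the stated (additive, respectively multiplicative) group actions and that the induced-subgraph isomorphisms are exactly the claimed translations and scalings. Once transitivity is in place the source-vertex decomposition makes the counts immediate. I note that the hypothesis $m \geq 2$ in \textbf{(c)} is forced precisely by the requirement $m-1 \geq 1$ when invoking \textbf{(b)}.
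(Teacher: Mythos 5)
Your proof is correct and follows essentially the same route as the paper's: both arguments decompose transitive subtournaments by their unique source vertex, use the additive translations $\lambda \mapsto \lambda + a$ as automorphisms of $G_k(q)$ for part (a) and the multiplicative maps $\lambda \mapsto a\lambda$ (valid since $S_k$ is a subgroup of $\mathbb{F}_q^{*}$) as automorphisms of $H_k(q)$ for part (b), and then obtain (c) by composition with $m$ replaced by $m-1$. The only cosmetic difference is that you phrase the count as a source--subtournament bijection over all vertices, while the paper first uses the automorphism to equate the counts originating from each vertex with those originating from $0$ (resp.\ $1$); the content is identical.
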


\begin{proof} A tournament of order $m$ is transitive if and only if the set of out-degrees of its vertices is $\{0,1, \dots ,m-1\}$. We represent a transitive subtournament of order $m$ by the $m$-tuple of its vertices $(a_1, a_2, \cdots, a_m)$, listed in order such that the out-degree of vertex $a_i$ is $m-i$, i.e. the corresponding $m$-tuple of out-degrees is $(m-1, m-2, \cdots, 1, 0)$. And we will call this a transitive subtournament originating from $a_1$.
For a graph $G$, we let $\mathcal{S}_{G, m}$ denote the set of transitive subtournaments of $G$ of order $m$ and we let $\mathcal{S}_{G, m, a}$ denote the set of transitive subtournaments of $G$ of order $m$ originating from $a$.\\
\indent (a) For $a \in V(G_k(q))$, the map $f_a(\lambda)=\lambda+a$ is an automorphism of $G_k(q)$. Thus, $|\mathcal{S}_{G,m+1,a}| = |\mathcal{S}_{G,m+1,0}|$ for all $a \in V(G_k(q))$. Also,
$$(0, a_1,a_2, \cdots,a_m) \in \mathcal{S}_{G, m+1, 0} \Longleftrightarrow  (a_1,a_2, \cdots,a_m)\in \mathcal{S}_{H, m}$$
so $|\mathcal{S}_{G,m+1,0}| = |\mathcal{S}_{H, m}|$. 
Therefore,
$$\mathcal{K}_{m+1}(G_k(q)) = \sum_{a \in V(G_k(q))} |\mathcal{S}_{G,m+1,a}| = q \, |\mathcal{S}_{G,m+1,0}| = q \, |\mathcal{S}_{H}| = q \, \mathcal{K}_{m}(H_k(q)).$$
\indent (b) For $a \in V(H_k(q))$, the map $f_a(\lambda)=a \lambda$ is an automorphism of $H_k(q)$, so $|\mathcal{S}_{H,m+1,a}| = |\mathcal{S}_{H,m+1,1}|$ for all $a \in V(H_k(q))$. Also
$$(1,a_1,a_2, \cdots,a_{m}) \in \mathcal{S}_{H,m+1,1} \Longleftrightarrow (a_1,a_2, \cdots,a_{m}) \in \mathcal{S}_{H^{1}, m}.$$ 
So $|\mathcal{S}_{H,m+1,1}| = |\mathcal{S}_{H^{1},m}|$.  Therefore,
$$\mathcal{K}_{m+1}(H_k(q)) = \sum_{a \in V(H_k(q))} |\mathcal{S}_{H,m+1,a}| = \#V(H_k(q)) |\mathcal{S}_{H^{1}, m}| = \tfrac{q-1}{k} \mathcal{K}_{m}(H^{1}_k(q)).$$
\indent (c) Follows immediately from combining (a) and (b).
\end{proof}
Taking $m=3$ in Lemma \ref{lem_Subgraph} (c) yields Corollary \ref{cor_Subgraph}.
\begin{cor}\label{cor_Subgraph}
Let $k \geq 2$ be an even integer. Let $q$ be a prime power such that $q \equiv k+1 \imod {2k}$.  Then
$$\mathcal{K}_{4}(G_k(q)) = \frac{q(q-1)}{k} \#E(H^{1}_k(q)).$$
\end{cor}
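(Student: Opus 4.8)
The plan is to read off the result directly from Lemma \ref{lem_Subgraph}(c) by specializing the parameter and then reinterpreting the order-two count as an edge count. First I would set $m = 3$ in part (c) of the lemma, which is valid since the lemma requires only $m \geq 2$. This substitution yields immediately
\begin{equation*}
\mathcal{K}_{4}(G_k(q)) = \frac{q(q-1)}{k} \, \mathcal{K}_{2}(H^{1}_k(q)),
\end{equation*}
so the entire content of the corollary reduces to the single identity $\mathcal{K}_{2}(H^{1}_k(q)) = \#E(H^{1}_k(q))$.

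To establish that identity I would appeal to the definition of a transitive subtournament given at the start of the proof of Lemma \ref{lem_Subgraph}: a tournament of order $m$ is transitive precisely when its out-degree multiset is $\{0,1,\dots,m-1\}$. For $m = 2$ this means the out-degrees are $\{0,1\}$, i.e.\ a transitive subtournament of order two is simply a pair of vertices joined by a single directed edge $a \to b$ (with $\outdeg = 1$ at $a$ and $\outdeg = 0$ at $b$). Every directed edge of $H^{1}_k(q)$ determines exactly one such subtournament and conversely, so the order-two transitive subtournaments are in bijection with the edges of $H^{1}_k(q)$. Hence $\mathcal{K}_{2}(H^{1}_k(q)) = \#E(H^{1}_k(q))$, and combining with the displayed specialization completes the proof.

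There is no real obstacle here: the result is a formal consequence of the already-proved Lemma \ref{lem_Subgraph}(c) together with the trivial observation that the number of order-two transitive subtournaments of any digraph equals its number of edges. The only point requiring any care is to confirm that the hypothesis $m \geq 2$ of Lemma \ref{lem_Subgraph}(c) is met by the choice $m = 3$, which it is. I would keep the write-up to these two short steps rather than unwinding the proof of the lemma itself.
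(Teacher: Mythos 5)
Your proof is correct and is essentially the paper's own argument: the paper obtains the corollary precisely by taking $m=3$ in Lemma \ref{lem_Subgraph}(c), leaving the identification $\mathcal{K}_{2}(H^{1}_k(q)) = \#E(H^{1}_k(q))$ implicit, whereas you spell out that (trivial) final step explicitly.
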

\noindent
Combining Corollary \ref{cor_Subgraph} and Proposition \ref{prop_Subgraph} (f) proves Theorem \ref{thm_Main1}.

Taking $m=2$ in Lemma \ref{lem_Subgraph} (a) yields Corollary \ref{cor_Subgraph2}.
\begin{cor}\label{cor_Subgraph2}
Let $k \geq 2$ be an even integer. Let $q$ be a prime power such that $q \equiv k+1 \imod {2k}$.  Then
$$\mathcal{K}_{3}(G_k(q)) = q \, \#E(H_k(q)).$$
\end{cor}
\noindent
Combining Corollary \ref{cor_Subgraph2} and Proposition \ref{prop_Subgraph} (c) proves Theorem \ref{thm_Main3}.


\section{Proof of Theorem \ref{thm_Main2}}\label{sec_ProofThm2}
We prove Theorem \ref{thm_Main2} by using the reduction formulae (\ref{for_3F2Red_1})-(\ref{for_3F2Red_6}) on the relevant summands in 
\begin{equation}\label{for_3F2_Total}
q^2 \sum_{\vec{t} \in \left({\mathbb{Z}_{k}}\right)^{5}} {_{3}F_2} \left( \vec{t} \; \big| \;  1 \right)_{q,k}
\end{equation}
from Theorem \ref{thm_Main1}.
Combining each of (\ref{for_3F2Red_1})-(\ref{for_3F2Red_6}) with (\ref{for_3F2Per}) yields ten distinct cases.\\[6pt] 
\underline{Case 1 ($t_1=0$):} 
Using (\ref{for_3F2Red_1}), we reduce the summands in (\ref{for_3F2_Total}) which have $t_1 = 0$.
\begin{align*}
q^2 & \sum_{t_2,t_3,t_4,t_5=0}^{k-1} 
(-1)^{t_3+t_5} {_{3}F_2}\biggl( \begin{array}{ccc} \varepsilon, & \chi_k^{t_2}, & \chi_k^{t_3} \vspace{.05in}\\
\phantom{\chi_k^{t_1}} & \chi_k^{t_4}, & \chi_k^{t_5} \end{array}
\Big| \; 1 \biggr)_{q}
\\[6pt]
&=
q^2 \sum_{t_2,t_3,t_4,t_5=0}^{k-1}
(-1)^{t_3+t_5}
\left[
-\frac{1}{q} \,
{_{2}F_1} {\left( \begin{array}{cc} \chi_k^{t_2-t_4}, &  \chi_k^{t_3-t_4} \vspace{.05in}\\
\phantom{\chi_k^{t_2-t_4}} &\chi_k^{t_5-t_4}\end{array}
\Big| \; 1 \right)}_{q}
+ \binom{\chi_k^{t_2}}{\chi_k^{t_4}} \binom{\chi_k^{t_3}}{\chi_k^{t_5}} \right]
\\[6pt]
&=
-q \sum_{t_2,t_3,t_4,t_5=0}^{k-1}
(-1)^{t_3+t_5} 
{_{2}F_1} {\left( \begin{array}{cc} \chi_k^{t_2-t_4}, &  \chi_k^{t_3-t_4} \vspace{.05in}\\
\phantom{\chi_k^{t_2-t_4}} &\chi_k^{t_5-t_4}\end{array}
\Big| \; 1 \right)}_{q} \\
& \qquad \qquad \qquad \qquad \qquad \qquad 
+
\sum_{t_2,t_3,t_4,t_5=0}^{k-1}
(-1)^{t_3+t_4} 
J(\chi_k^{t_2}, \bar{\chi_k}^{t_4}) \, J(\chi_k^{t_3}, \bar{\chi_k}^{t_5})
\\[6pt]
&=
-q \sum_{t_2,t_3,t_4,t_5=0}^{k-1}
(-1)^{t_3+t_5} 
{_{2}F_1} {\left( \begin{array}{cc} \chi_k^{t_2}, &  \chi_k^{t_3} \vspace{.05in}\\
\phantom{\chi_k^{t_2}} &\chi_k^{t_5}\end{array}
\Big| \; 1 \right)}_{q}
+ \mathbb{J}_0(q, k)^2
\\[6pt]
&=
-qk \sum_{t_2,t_3,t_5=0}^{k-1}
(-1)^{t_3+t_2+t_5} 
\binom{\chi_k^{t_3}}{\chi_k^{t_5-t_2}} + \mathbb{J}_0(q, k)^2 \qquad \textup{(using (\ref{for_2F1Red}))}
\\[6pt]
&=
-k \sum_{t_2,t_3,t_5=0}^{k-1}
(-1)^{t_3} 
J(\chi_k^{t_3},\chi_k^{t_2-t_5}) + \mathbb{J}_0(q, k)^2 
\\[6pt]
&=
-k \sum_{t_2,t_3,t_5=0}^{k-1}
(-1)^{t_3} 
J(\chi_k^{t_3},\chi_k^{t_2}) + \mathbb{J}_0(q, k)^2 
\\[6pt]
&=
\mathbb{J}_0(q, k)^2-k^2 \, \mathbb{J}_0(q, k),
\end{align*}
where we have used the fact that $\chi_k(-1)=-1$ in many of the steps.\\[6pt] 
\underline{Case 2 ($t_2=0$):} 
Next, using (\ref{for_3F2Red_2}), we reduce the summands which have $t_2 = 0$, excluding those with $t_1=0$, as they have already been accounted for in Case 1.\\
\begin{align*}
q^2  & \sum_{\substack{t_1,t_3,t_4,t_5=0 \\ t_1 \neq 0}}^{k-1} 
(-1)^{t_3+t_5} 
{_{3}F_2}\biggl( \begin{array}{ccc} \chi_k^{t_1}, & \varepsilon, & \chi_k^{t_3} \vspace{.05in}\\
\phantom{\chi_k^{t_1}} & \chi_k^{t_4}, & \chi_k^{t_5} \end{array}
\Big| \; 1 \biggr)_{q}
\\[6pt]
&=
q^2 \sum_{\substack{t_1,t_3,t_4,t_5=0 \\ t_1 \neq 0}}^{k-1} 
(-1)^{t_3+t_5} 
\left[
(-1)^{t_1} \binom{\chi_k^{t_4}}{\chi_k^{t_1}}
{_{2}F_1} {\left( \begin{array}{cc} \chi_k^{t_1-t_4}, &  \chi_k^{t_3-t_4} \vspace{.05in}\\
\phantom{\chi_k^{t_1-t_4}} & \chi_k^{t_5-t_4} \end{array}
\Big| \; 1 \right)}_{q}
-\frac{(-1)^{t_4}}{q} \binom{\chi_k^{t_3}}{\chi_k^{t_5}}
\right]
\\[6pt] 
&=
\sum_{\substack{t_1,t_3,t_4,t_5=0 \\ t_1 \neq 0}}^{k-1} 
(-1)^{t_3+t_4} 
\left[
J(\chi_k^{t_4},\bar{\chi_k}^{t_1}) \, J(\chi_k^{t_3-t_4},\chi_k^{t_1-t_5})
- J(\chi_k^{t_3},\bar{\chi_k}^{t_5})
\right]  \quad \textup{(using (\ref{for_2F1Red}))}
\\[6pt] 
&=
\sum_{\substack{t_1,t_4=0 \\ t_1 \neq 0}}^{k-1} 
(-1)^{t_4} J(\chi_k^{t_4},\bar{\chi_k}^{t_1}) \, 
\sum_{t_3,t_5=0}^{k-1} 
(-1)^{t_3} J(\chi_k^{t_3-t_4},\chi_k^{t_1-t_5})
- 0
\\[6pt]
&=
\sum_{\substack{t_1,t_4=0 \\ t_1 \neq 0}}^{k-1} 
J(\chi_k^{t_4},\bar{\chi_k}^{t_1}) \, 
\sum_{t_3,t_5=0}^{k-1} 
(-1)^{t_3} J(\chi_k^{t_3},\chi_k^{t_5})
\\[6pt] 
&=
\left[\mathbb{J}_0(q, k)
- \sum_{t=0}^{k-1} J(\chi_k^{t},\varepsilon) \right] 
 \mathbb{J}_0(q, k) 
\\[6pt]
&=
\left[\mathbb{J}_0(q, k)
- (q-2) +(k-1) \right] 
\mathbb{J}_0(q, k) 
\qquad \textup{(using Prop \ref{prop_JacBasic})} 
\\[6pt]
&=
\mathbb{J}_0(q, k)^2
- (q-k-1) \, \mathbb{J}_0(q, k) .
\end{align*}
We evaluate the remaining cases in a similar manner, using only basic properties of Jacobi sums and hypergeometric functions from Sections \ref{sec_Prelim_GJSums} and \ref{sec_Prelim_HypFns}. 
However, these evaluations do become more tedious, as we have to exclude successively more previous cases. We omit the details, for brevity. 
These remaining cases summarize as follows.\\[6pt] 
\underline{Case 3 ($t_3=0$):}
\begin{multline*}
q^2  \sum_{\substack{t_1,t_2,t_4,t_5=0 \\ t_1, t_2 \neq 0}}^{k-1} 
(-1)^{t_5}
{_{3}F_2}\biggl( \begin{array}{ccc} \chi_k^{t_1}, & \chi_k^{t_2}, & \varepsilon \vspace{.05in}\\
\phantom{\chi_k^{t_1}} & \chi_k^{t_4}, & \chi_k^{t_5} \end{array}
\Big| \; 1 \biggr)_{q}\\
=
\mathbb{J}_0(q, k)^2
- (q+k(k-2)) \, \mathbb{J}_0(q, k)
- \mathbb{JJ}_0(q,k) 
+(q-1)(q-k-1).
\end{multline*}
\newpage
\noindent
\underline{Case 4 ($t_4=t_1$):}
\begin{multline*}
q^2  \sum_{\substack{t_1,t_2,t_3,t_5=0 \\ t_1, t_2,t_3 \neq 0}}^{k-1} 
(-1)^{t_3+t_5} 
{_{3}F_2}\biggl( \begin{array}{ccc} \chi_k^{t_1}, & \chi_k^{t_2}, & \chi_k^{t_3} \vspace{.05in}\\
\phantom{\chi_k^{t_1}} & \chi_k^{t_1}, & \chi_k^{t_5} \end{array}
\Big| \; 1 \biggr)_{q}\\
=
\mathbb{J}_0(q, k)^2
- (2q-4k+1) \, \mathbb{J}_0(q, k) 
+(q-k-1)(q-2k+1).
\end{multline*}
\\ \underline{Case 5 ($t_5=t_1$):}
\begin{multline*}
q^2  \sum_{\substack{t_1,t_2,t_3,t_4=0 \\ t_1, t_2,t_3 \neq 0 \\ t_1 \neq t_4}}^{k-1} 
(-1)^{t_3+t_1} 
{_{3}F_2}\biggl( \begin{array}{ccc} \chi_k^{t_1}, & \chi_k^{t_2}, & \chi_k^{t_3} \vspace{.05in}\\
\phantom{\chi_k^{t_1}} & \chi_k^{t_4}, & \chi_k^{t_1} \end{array}
\Big| \; 1 \biggr)_{q}\\
=
\mathbb{J}_0(q, k)^2
- (2q-3k+3) \, \mathbb{J}_0(q, k) 
- \mathbb{JJ}^{-}_0(q,k)
- \mathbb{J}^{-}_0(q,k) 
+2q^2-2(2k-1)q +(k-1)(k+2).
\end{multline*}
\\ \underline{Case 6 ($t_4=t_2$):}
\begin{multline*}
q^2  \sum_{\substack{t_1,t_2,t_3,t_5=0 \\ t_1, t_2,t_3 \neq 0 \\ t_1 \neq t_2, t_5}}^{k-1} 
(-1)^{t_3+t_5}
{_{3}F_2}\biggl( \begin{array}{ccc} \chi_k^{t_1}, & \chi_k^{t_2}, & \chi_k^{t_3} \vspace{.05in}\\
\phantom{\chi_k^{t_1}} & \chi_k^{t_2}, & \chi_k^{t_5} \end{array}
\Big| \; 1 \biggr)_{q}\\
=
\mathbb{J}_0(q, k)^2
- (q+k^2-5k+7) \, \mathbb{J}_0(q, k) 
- 2\, \mathbb{JJ}_0(q,k) 
+2q^2+(k^2-8k+4)q-(k-1)(k^2-4k-2).
\end{multline*}
\\ \underline{Case 7 ($t_5=t_3$):}
\begin{multline*}
q^2  \sum_{\substack{t_1,t_2,t_3,t_4=0 \\ t_1, t_2,t_3 \neq 0 \\ t_1 \neq t_3, t_4 \\ t_2 \neq t_4}}^{k-1} 
{_{3}F_2}\biggl( \begin{array}{ccc} \chi_k^{t_1}, & \chi_k^{t_2}, & \chi_k^{t_3} \vspace{.05in}\\
\phantom{\chi_k^{t_1}} & \chi_k^{t_4}, & \chi_k^{t_3} \end{array}
\Big| \; 1 \biggr)_{q}\\
=
\mathbb{J}_0(q, k)^2
- (2q-5k+9) \, \mathbb{J}_0(q, k) 
- 2\, \mathbb{JJ}^{-}_0(q,k) 
- \mathbb{J}^{-}_0(q,k) 
+3q^2-8(k-1)q+5k^2-10k+1.
\end{multline*}
\\ \underline{Case 8 ($t_5=t_2$):}
\begin{multline*}
q^2  \sum_{\substack{t_1,t_2,t_3,t_4=0 \\ t_1, t_2,t_3 \neq 0 \\ t_1 \neq t_2, t_4 \\ t_2 \neq t_3, t_4}}^{k-1} 
(-1)^{t_3+t_2}
{_{3}F_2}\biggl( \begin{array}{ccc} \chi_k^{t_1}, & \chi_k^{t_2}, & \chi_k^{t_3} \vspace{.05in}\\
\phantom{\chi_k^{t_1}} & \chi_k^{t_4}, & \chi_k^{t_2} \end{array}
\Big| \; 1 \biggr)_{q}\\
=
\mathbb{J}_0(q, k)^2
- (2q-2k+9) \, \mathbb{J}_0(q, k) 
- \mathbb{JJ}_0(q,k) 
- 2\, \mathbb{JJ}^{-}_0(q,k) 
- 3 \, \mathbb{J}^{-}_0(q,k) 
+4q^2-(8k-10)q+k^2-6k+2.
\end{multline*}
\\ \underline{Case 9 ($t_4=t_3$):}
\begin{multline*}
q^2  \sum_{\substack{t_1,t_2,t_3,t_5=0 \\ t_1, t_2,t_3 \neq 0 \\ t_1, t_2 \neq t_3, t_5 \\ t_3 \neq t_5}}^{k-1} 
(-1)^{t_3+t_5} 
{_{3}F_2}\biggl( \begin{array}{ccc} \chi_k^{t_1}, & \chi_k^{t_2}, & \chi_k^{t_3} \vspace{.05in}\\
\phantom{\chi_k^{t_1}} & \chi_k^{t_3}, & \chi_k^{t_5} \end{array}
\Big| \; 1 \biggr)_{q}\\
=
\mathbb{J}_0(q, k)^2
- (2q+k^2-8k+18) \, \mathbb{J}_0(q, k) 
- 3\, \mathbb{JJ}_0(q,k) 
+4q^2+2(k-1)(k-8)q-2(k-2)(k^2-6k+2).
\end{multline*}
\\ \underline{Case 10 ($t_1+t_2+t_3=t_4+t_5$):}
\begin{multline*}
q^2  \sum_{\substack{t_1,t_2,t_3,t_4=0 \\ t_1, t_2, t_3 \neq 0 \\ t_4 \neq t_1, t_2, t_3 \\ t_4 \neq t_1+t_2, t_1+t_3, t_2+t_3}}^{k-1} 
(-1)^{t_1+t_2+t_4}
{_{3}F_2}\biggl( \begin{array}{ccc} \chi_k^{t_1}, & \chi_k^{t_2}, & \chi_k^{t_3} \vspace{.05in}\\
\phantom{\chi_k^{t_1}} & \chi_k^{t_4}, & \chi_k^{t_1+t_2+t_3-t_4} \end{array}
\Big| \; 1 \biggr)_{q}\\
=
\mathbb{J}_0(q, k)^2
- (2q+k^2-10k+24) \, \mathbb{J}_0(q, k) 
- 3\, \mathbb{JJ}_0(q,k) 
+4q^2+2(k^2-10k+11)q-2(k^3-10k^2+21k-7).
\end{multline*}
The total of these ten reducible cases is
\begin{align*}
q^2 \sum_{\vec{t} \in \left({\mathbb{Z}_{k}}\right)^{5} \setminus X_k} {_{3}F_2} \left( \vec{t} \; \big| \;  1 \right)_{q,k}
&=
10 \, \mathbb{J}_0(q, k)^2
-5(3q+k^2-8k+14) \, \mathbb{J}_0(q,k) 
\\[-6pt] & \qquad
-10 \, \mathbb{JJ}_0(q,k) 
-5 \, \mathbb{J}^{-}_0(q,k) 
-5 \, \mathbb{JJ}^{-}_0(q,k) 
\\[6pt] & \qquad \qquad
+21 q^2
+5(k^2 - 14k+12)q
-5k^3+50k^2-85k+21.
\end{align*}
Applying Proposition \ref{prop_JtoRS} yields
\begin{multline*}
q^2 \sum_{\vec{t} \in \left({\mathbb{Z}_{k}}\right)^{5} \setminus X_k} {_{3}F_2} \left( \vec{t} \; \big| \;  1 \right)_{q,k}
=
10 \, \mathbb{R}_k(q)^2 + 5 \, \mathbb{R}_k(q) \left( q-k^2+1\right) -10 \, \mathbb{S}_k(q) 
\\ 
-5 \, \mathbb{S}^{-}_k(q) 
+q^2
-10 (k-1)^2 q
+ 5k^2(k-1)+1.
\end{multline*}
which completes the proof of Theorem \ref{thm_Main2}.


\section{Orbits of $X_k$ and Proofs of Corollaries \ref{cor_k2}, \ref{cor_k4}, \ref{cor_3k2} \& \ref{cor_3k4}}\label{sec_Orbits}
We now want to use Theorem \ref{thm_Main2} to evaluate $\mathcal{K}_4(G_k(q))$ for specific $k$, which requires evaluating the hypergeometric terms in 
\begin{equation}\label{for_3F2_ND}
q^2 \sum_{\vec{t} \in X_k} {_{3}F_2} \left( \vec{t} \; \big| \;  1 \right)_{q,k},
\end{equation}
where 
$X_k := \{  (t_1,t_2,t_3,t_4,t_5) \in \left({\mathbb{Z}_{k}}\right)^{5} \mid t_1,t_2,t_3 \neq 0, t_4,t_5 \, ; \, t_1+t_2+t_3 \neq t_4+t_5 \}$. 
Many of the hypergeometric function summands in (\ref{for_3F2_ND}) can be related via the transformation formulae (\ref{for_3F2Per}), (\ref{for_3F2_T1})-(\ref{for_3F2_T7}). In fact, any two summands related via one of these transformations will be equal, up to sign.
For example, applying (\ref{for_3F2_T4}) we get that
\begin{equation}\label{for_Teg}
{_{3}F_2} \left( (t_1,t_2,t_3,t_4,t_5) \; \big| \;  1 \right)_{q,k}
=
(-1)^{t_1}
{_{3}F_2} \left( (t_1, t_2, t_5-t_3, t_1+t_2-t_4, t_5) \; \big| \;  1 \right)_{q,k}.
\end{equation}
If we ignore the sign (for now), then, to the change in parameters associated to each transformation (\ref{for_3F2Per}), (\ref{for_3F2_T1})-(\ref{for_3F2_T7}), we can associate a map on $X_k$. Continuing the above example, the relation (\ref{for_Teg}) induces the map
$T_4: X_k \to X_k$ given by 
$$T_4(t_1, t_2, t_3, t_4, t_5) = (t_1, t_2, t_5-t_3, t_1+t_2-t_4, t_5),$$
where the addition in each component takes place in $\mathbb{Z}_k$.
Similarly, to the other transformations in (\ref{for_3F2_T1})-(\ref{for_3F2_T7}) and (\ref{for_3F2Per}), we can associate the maps
\begin{align*}
T_1(t_1, t_2, t_3, t_4, t_5) &= (t_2-t_4, t_1-t_4, t_3-t_4, -t_4, t_5-t_4);\\
T_2(t_1, t_2, t_3, t_4, t_5) &= (t_1, t_1-t_4, t_1-t_5, t_1-t_2, t_1-t_3);\\
T_3(t_1, t_2, t_3, t_4, t_5) &= (t_2-t_4, t_2, t_2-t_5, t_2-t_1, t_2-t_3);\\
T_5(t_1, t_2, t_3, t_4, t_5) &= (t_1, t_4-t_2, t_3, t_4, t_1+t_3-t_5);\\
T_6(t_1, t_2, t_3, t_4, t_5) &= (t_4-t_1, t_2, t_3, t_4, t_2+t_3-t_5);\\ 
T_7(t_1, t_2, t_3, t_4, t_5) &= (t_4-t_1, t_4-t_2, t_3, t_4, t_4+t_5-t_1-t_2); \, \textup{and}\\
T_8(t_1, t_2, t_3, t_4, t_5) &= (t_1, t_3, t_2, t_5, t_4).
\end{align*}
We form the group generated by $T_1, T_2, \cdots, T_8$, with operation composition of functions, and call it $\mathbb{G}_k$.
Then $\mathbb{G}_k$ acts on $X_k$. Furthermore, the values of ${_{3}F_2} \left( \vec{t} \; \big| \;  1 \right)_{q,k}$ within an orbit are equal, up to sign. 
The group $\mathbb{G}_k$ and this action has been fully described in \cite{MS} (extending the work in \cite{DM}) and includes python code to generate all the orbits for a given $k$. In particular, $\mathbb{G}_k$ is a group of order $120$ isomorphic to the permutation group $S_5$ and the number of orbits, $N_{\mathbb{G}_k}$,  is given by
\begin{multline*}\label{for_Nk_New}
N_{\mathbb{G}_k}= \frac{1}{120} \left[(k-1)(k^4-9k^3+61k^2-189k+280)
\right. \\ \left.
+
\begin{cases}
0 & \textup{if } k \equiv 1,5,7,11 \imod{12},\\
40k-200 & \textup{if } k \equiv 3,9 \imod{12},\\
105k-180 & \textup{if } k \equiv 2,10 \imod{12},\\
105k-240 & \textup{if }k \equiv 4,8 \imod{12},\\
145k - 380 & \textup{if }k \equiv 6 \imod{12},\\
145k - 440& \textup{if } k \equiv 0 \imod{12}.
\end{cases}
\right]
\end{multline*} 
We note also that
\begin{equation*}\label{for_Xk}
|X_k| = \sum_{t_1,t_2, t_3=1}^{k-1} 
\sum_{\substack{t_4,t_5=0 \\ t_4,t_5 \neq t_1,t_2,t_3 \\ t_4+t_5 \neq t_1+t_2+t_3}}^{k-1} 1
=(k-1)(k^4-9k^3+36k^2-69k+51).
\end{equation*}
We now incorporate the signs associated to the transformations (\ref{for_3F2Per}), (\ref{for_3F2_T1})-(\ref{for_3F2_T7}).
We illustrate how to do this using $k=4$ as a case study.
When $k=4$, there are $|X_4|=93$ summands in (\ref{for_3F2_ND}) but only $N_{\mathbb{G}_4}=6$ orbits, with representatives
$(1,1,1,0,0)^{10}$,
$(3,3,3,0,0)^{10}$,
$(1,3,2,0,0)^{30}$,
$(1,2,2,0,0)^{30}$,
$(1,1,3,0,0)^{12}$, and
$(2,2,2,0,0)^{1}$,
where the superscript represents the number of elements in the orbit.
Within each orbit, we now consider the sign associated to the transformations linking the elements.
For example, the orbit represented by $(1,2,2,0,0)$ contains the elements $(3,2,2,0,0)$, via $T_6$, and $(2,2,2,1,0)$, via $T_3$.
The corresponding transformation of ${_{3}F_2} \left( \vec{t} \; \big| \;  1 \right)_{q,k}$, via (\ref{for_3F2_T6}) and (\ref{for_3F2_T3}) respectively, yields
$${_{3}F_2} \left( (1,2,2,0,0) \; \big| \;  1 \right)_{q,k} = + \, {_{3}F_2} \left( (3,2,2,0,0) \; \big| \;  1 \right)_{q,k}$$
and
$${_{3}F_2} \left( (1,2,2,0,0) \; \big| \;  1 \right)_{q,k} = - \,{_{3}F_2} \left( (2,2,2,1,0) \; \big| \;  1 \right)_{q,k}.$$
Therefore, the net contribution to the sum in (\ref{for_3F2_ND}) for $\vec{t} = (3,2,2,0,0)$ and $\vec{t} = (2,2,2,1,0)$ is zero.
We can relate the values of ${_{3}F_2} \left( \vec{t} \; \big| \;  1 \right)_{q,k}$ for all $\vec{t}$ in the orbit in a similar way, and we find that the net contribution to the sum in (\ref{for_3F2_ND}) for all $30$ elements in the orbit by $(1,2,2,0,0)$ is $+10$.
Doing this for each orbit we find that the six orbits of $X_4$ are 
$(1,1,1,0,0)^{0}$,
$(3,3,3,0,0)^{0}$,
$(1,3,2,0,0)^{0}$,
$(1,2,2,0,0)^{+10}$,
$(1,1,3,0,0)^{0}$, and
$(2,2,2,0,0)^{+1}$,
where, now, the superscripts represent the net contribution of the orbit to the overall sum after signs have been taken into account. 

In general, we apply this approach for any $k$. In fact, we can automate the process by modifying the code provided in \cite{MS} to track the signs and to output the net contribution associated to each orbit.
This code can be found on the first author's webpage.

We now prove Corollaries \ref{cor_k2} and \ref{cor_k4}.
\begin{proof}[Proof of Corollary \ref{cor_k2}]
We apply Theorem \ref{thm_Main2} with $k=2$. Now $\mathbb{R}_2(q) = \mathbb{S}_2(q) = \mathbb{S}^{-}_2(q) = 0$ as there are no indices that satisfy the conditions of the sum in each case. Also, 
$\vec{t} = (1,1,1,0,0)$ is the only element of $X_2$. Therefore, letting $\varphi \in \widehat{\mathbb{F}^{*}_{q}}$ denote the character of order two, 
\begin{equation*}\label{for_k4G2}
\mathcal{K}_4(G_2(q)) =
\frac{q(q-1)}{2^6} 
\Biggl[ 
 q^2 - 10q + 21
-q^2 
{_{3}F_2} {\left( \begin{array}{ccc} \varphi, & \varphi, & \varphi \\
\phantom{\varphi} & \varepsilon, & \varepsilon \end{array}
\Big| \; 1 \right)}_{q}
\Biggr].
\end{equation*}
By \cite[Thm 4.37]{G2} we find that, when $q\equiv 3 \imod{4}$,
\begin{equation*}
q^2 {_{3}F_2} {\left( \begin{array}{ccc} \varphi, & \varphi, & \varphi \\
\phantom{\varphi} & \varepsilon, & \varepsilon \end{array}
\Big| \; 1 \right)}_{q}
=0,
\end{equation*}
which yields the result.
\end{proof}

\begin{proof}[Proof of Corollary \ref{cor_k4}]
Using Proposition \ref{prop_JacXfer} and Lemma \ref{lem_JacOrder4}(1) we get that 
\begin{equation}\label{for_R4}
\mathbb{R}_4(q) = - J(\chi_4,\chi_4) - J(\bar{\chi_4},\bar{\chi_4}) =2x.
\end{equation}
Using Propositions \ref{prop_JacXfer} \& \ref{prop_JacProdq} and Lemma \ref{lem_JacOrder4}(2) we have
\begin{equation}\label{for_S4}
\mathbb{S}_4(q) = -4q + J(\chi_4,\chi_4)^2 + J(\bar{\chi_4},\bar{\chi_4})^2 = 4x^2-6q.
\end{equation}
and
\begin{equation}\label{for_S4M}
\mathbb{S}^{-}_4(q) = - J(\chi_4,\chi_4)^2 - J(\bar{\chi_4},\bar{\chi_4})^2 = 2q - 4x^2.
\end{equation}
As described above, $X_4$ contains six orbits with representatives 
$(1,1,1,0,0)^{0}$,
$(3,3,3,0,0)^{0}$,
$(1,3,2,0,0)^{0}$,
$(1,2,2,0,0)^{+10}$,
$(1,1,3,0,0)^{0}$, and
$(2,2,2,0,0)^{+1}$,
where the superscripts represent the net contribution of the orbit to the overall sum after signs have been taken into account. 
Therefore, taking $k=4$ in Theorem \ref{thm_Main2} and accounting for (\ref{for_R4})-(\ref{for_S4M}) yields
\begin{multline*}
\mathcal{K}_4(G_4(q)) =
\frac{q(q-1)}{2^{12}}. 
\Biggl[ 
q^2+2q(5x-20)
+20x^2-150x+241\\
+10 \, q^2 
{_{3}F_2} {\left( \begin{array}{cccc}  \chi_4, & \varphi, &  \varphi \\
\phantom{\chi_4} & \varepsilon, &  \varepsilon \end{array}
\Big| \; 1 \right)}_{q}
+
q^2 {_{3}F_2} {\left( \begin{array}{ccc} \varphi, & \varphi, & \varphi \\
\phantom{\varphi} & \varepsilon, & \varepsilon \end{array}
\Big| \; 1 \right)}_{q}
\Biggr].
\end{multline*}
From \cite[(6.4)]{DM} we get that, when $q\equiv 1 \imod{4}$,
\begin{equation*}\label{for_3F2_qOno}
q^2 {_{3}F_2} {\left( \begin{array}{ccc} \varphi, & \varphi, & \varphi \\
\phantom{\varphi} & \varepsilon, & \varepsilon \end{array}
\Big| \; 1 \right)}_{q}
=4x^2-2q.
\end{equation*}
which completes the proof.
\end{proof}

We now also have all the ingredients to prove Corollaries \ref{cor_3k2} and \ref{cor_3k4}.

\begin{proof}[Proof of Corollaries \ref{cor_3k2} and \ref{cor_3k4}]
We've seen above that $\mathbb{R}_2(q) = 0$, $\mathbb{R}_4(q) =2x$. Taking $k=2,4$ in Theorem \ref{thm_Main3} yields the results.
\end{proof}


\section{Lower bounds for multicolor directed Ramsey numbers}\label{sec_Ramsey}
Let $t\geq 2$ and $n_1,n_2, \cdots, n_t$ be positive integers. Let $T_m$ denote a tournament of order $m$. The multicolor directed Ramsey number $R(n_1,n_2, \cdots, n_t)$ is the smallest integer $m$, 
such that any $T_m$, whose edges have been colored with $t$ colors, contains a transitive subtournament $T_{n_i}$ in color $i$, for some $1 \leq i \leq t$.
These numbers exist \cite{MT} and, when $t=1$, we recover the usual Ramsey numbers for tournaments.
If $n_1=n_2=\cdots = n_t$, then we use the abbreviated $R_t(n_1)$ to denote the multicolor directed Ramsey number $R(n_1,n_2, \cdots, n_t)$. 
We note that \cite[Prop. 5]{MT}
\begin{equation}\label{for_LBMult}
(R_{t-1}(m)-1)(R(m)-1) +1 \leq R_t(m)  
\end{equation}
Combining with the facts that $R(3)=4$, $R(3,3)=14$ \cite{BD, MT} and $R(4)=8$ \cite{EM}, we get that, for $t \geq 2$,
\begin{equation}\label{for_LBs}
13 \cdot 3^{t-2} + 1 \leq R_t(3)
\qquad \textup{and} \qquad 
7^t+1 \leq R_t(4)
\end{equation}

Recall, $S_k$ is the subgroup of the multiplicative group $\mathbb{F}_q^{\ast}$ of order $\frac{q-1}{k}$ containing the $k$-th power residues, i.e., if $\omega$ is a primitive element of $\mathbb{F}_q$, then $S_k = \langle \omega^k \rangle$. And, our $k$-th power Paley digraph of order $q$, $G_k(q)$, for $q$ a prime power such that $q \equiv k+1 \imod {2k}$, is the graph with vertex set $\mathbb{F}_q$ where $a \to b$ is an edge if and only if $b-a \in S_k$. Recall also, due to the conditions imposed on $q$, that $-1 \notin S_k$.

We now define subsets of $\mathbb{F}_q^{\ast}$, $S_{k,i} := \omega^{i} S_k$, for $0 \leq i \leq \tfrac{k}{2}-1$, and the related directed graphs $G_{k,i}(q)$ with vertex set $\mathbb{F}_q$ where $a \to b$ is an edge if and only if $b-a \in S_{k,i}$. Each $G_{k,i}(q)$ is isomorphic to $G_{k,0}(q)=G_k(q)$, the $k$-th power Paley digraph, via the map $f:V(G_k(q)) \to V(G_{k,i}(q))$ given by $f(a)=\omega^i a$. Now consider the \emph{multicolor $k$-th power Paley tournament} $P_k(q)$ whose vertex set is taken to be $\mathbb{F}_q$ and whose edges are colored in $\frac{k}{2}$ colors according to $a \to b$ has color $i$ if $b-a \in S_{k,i}$. Note that the induced subgraph of color $i$ of $P_k(q)$ is $G_{k,i}(q)$. Thus, $P_k(q)$ has a transitive subtournament $T_m$ in a single color if and only if the $k$-th power Paley digraph contains a transitive subtournament $T_m$.  Therefore, if $\mathcal{K}_m(G_k(q_1))=0$ for some $q_1$, then $q_1<R_{\frac{k}{2}}(m)$. 

So, for $m=3,4$ and a given $k$, we can use Theorems \ref{thm_Main3} and \ref{thm_Main2}, respectively, to search for the greatest $q$ such that $\mathcal{K}_m(G_k(q))=0$, thus establishing that $q<R_{\frac{k}{2}}(m)$.
The $k=2$ and $k=4$ cases for both $m=3$ and $m=4$ can easily be derived from Corollaries \ref{cor_k2}, \ref{cor_k4}, \ref{cor_3k2} and \ref{cor_3k4}, as discussed in Section \ref{sec_Results}.
For higher $k$, our search, of all $q<10000$, yielded the lower bounds shown in Table \ref{tab_LowerBounds}. 
Improvements on known bounds are marked in bold.

\begin{table}[!htbp]
\centering
\begin{tabular}{| c | l | l |}
\hline
$t=\frac{k}{2}$ & $\leq R_{t}(3)$ & $\leq R_{t}(4)$\\[4pt]
\hline
1 & 4 & 8 \\
2 & 14 & \textbf{126} \\
3 & \textbf{44} & 344 \\
4 & 42 & 954 \\
5 & 72 & 3332 \\
\hline
\end{tabular}
\vspace{6pt}
\caption{Lower Bounds for $R_{\frac{k}{2}}(3)$ and $R_{\frac{k}{2}}(4)$. }
\label{tab_LowerBounds}
\end{table}
It is already known that $R(3)=4$, $R(3,3)=14$ and $R(4)=8$. $44 \leq R_3(3)$ and $126 \leq R_2(4)$ improve on the bounds from (\ref{for_LBs}). The remaining bounds in Table \ref{tab_LowerBounds} either equal or fall short of those implied by (\ref{for_LBs}). However, we can combine the bounds in bold with the multiplicative relation (\ref{for_LBMult}) to improve the bounds in  (\ref{for_LBs}), which proves Corollaries \ref{cor_Ramsey4t} and \ref{cor_Ramsey3t}.


\vspace{6pt}

\end{document}